\documentclass[a4paper,draft]{amsproc}
\usepackage{amssymb}
\usepackage{amscd}

\theoremstyle{plain}
 \newtheorem{thm}{Theorem}[section]

 \newtheorem{cor}{Corollary}[section]
\theoremstyle{definition}
 
 \newtheorem{dfn}{Definition}[section]
\theoremstyle{remark}
 \newtheorem{rem}{Remark}[section] 
 \numberwithin{equation}{section}

\renewcommand{\leq}{\leqslant}
\renewcommand{\geq}{\geqslant}

\setlength{\textwidth}{28cc} \setlength{\textheight}{42cc}
\begin{document}

\title[A tour through $\delta$-invariants]{A tour through $\delta$-invariants: From Nash's embedding theorem to  ideal immersions, best ways of living and beyond$^{\,1}$}

\subjclass[2010]{Primary 52-02, 53A07; Secondary 35P15, 53B21, 53C40,  92K99}


\author[Chen]{\bfseries Bang-Yen Chen}

\address{
Department of Mathematics \\ 
Michigan State University   \\ 
619 Red Cedar Road\\ East Lansing, Michigan 48824--1027\\
U.S.A.}
\email{bychen@math.msu.edu}

\thanks{{${}^{1)}$\,For general references of this article, see \cite{c7,c2000,book}.}} 

\dedicatory{To the memory of my esteemed friend and coworker,\\Franki Dillen}

\vspace{18mm}
\setcounter{page}{1}
\thispagestyle{empty}

\begin{abstract}
First I will explain my motivation to introduce the $\delta$-invariants for Riemannian manifolds. I will also recall the notions of ideal immersions and best ways of living. Then I will present a few  of the many applications of $\delta$-invariants to several areas in mathematics. Finally, I will present two optimal inequalities involving $\delta$-invariants for Lagrangian submanifolds obtained very recently in joint papers with F. Dillen, J. Van der Veken and L. Vrancken. 
\end{abstract}

\maketitle

\section{Why  introduced $\delta$-invariants\,?}

In this section I will explain my motivation to introduce the $\delta$-invariants.

\subsection{Nash's embedding theorem}  

In 1956, John F. Nash (1928\,-- )  proved in \cite{nash} the following famous embedding theorem. 

\begin{thm} Every  Riemannian $n$-manifold can be isometrically 
 embedded in a Euclidean $m$-space $\mathbb E^m$ with $m=\frac{n}{2} (n+1)(3n+11)$.
 \end{thm}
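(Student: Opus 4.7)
The plan is to follow Nash's original strategy, which splits the proof into a geometric reduction to a \emph{short} embedding and an analytic iteration that removes the remaining metric deficit. First I would invoke Whitney's embedding theorem to produce some smooth embedding $f_0 : M^n \to \mathbb{E}^N$ into a Euclidean space of dimension depending only on $n$. Scaling $f_0$ by a small constant makes the induced metric strictly smaller than the target metric $g$, so that the deficit $h := g - f_0^{\ast}\langle\cdot,\cdot\rangle$ is a smooth, positive-definite symmetric $(0,2)$-tensor on $M$. The task reduces to producing a perturbation $v : M \to \mathbb{E}^{m-N}$ with $v^{\ast}\langle\cdot,\cdot\rangle = h$, since then $(f_0, v) : M \to \mathbb{E}^m$ is an isometric embedding.

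Next I would build $v$ by successive approximation. Decompose $h$, via a partition of unity and a fibrewise spectral decomposition, as a locally finite sum of rank-one forms of the shape $a_k^2 \, d\varphi_k \otimes d\varphi_k$. Each such piece can be realised, up to controllable errors, by a high-frequency \emph{spiral} perturbation of the form
\[
v_k \;=\; \tfrac{a_k}{\lambda}\bigl(\cos(\lambda\varphi_k)\, e_k + \sin(\lambda\varphi_k)\, e'_k\bigr)
\]
in two previously unused normal directions $e_k, e'_k$; a direct computation gives $v_k^{\ast}\langle\cdot,\cdot\rangle = a_k^2\, d\varphi_k \otimes d\varphi_k + O(\lambda^{-1})$. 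Since only finitely many partition functions are nonzero near any point, only finitely many normal directions are consumed locally; a careful book-keeping of how many rank-one pieces suffice globally, together with the extra directions required to absorb jets in the analytic step below, produces the explicit bound $m = \frac{n(n+1)(3n+11)}{2}$.

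The genuine difficulty is analytic. At each stage one must solve the linearised isometric embedding equation
\[
\partial_i f \cdot \partial_j v + \partial_j f \cdot \partial_i v = h_{ij},
\]
and the associated solution operator loses derivatives, so a naive Newton or Picard iteration fails to converge in $C^\infty$. To bypass this I would apply Nash's \emph{hard} implicit function theorem: introduce smoothing operators $S_\theta$ and design a modified Newton scheme in which each step's loss of derivatives is compensated by prior smoothing, with errors decaying geometrically in a suitable scale of H\"older norms. The requirement that the linear system be pointwise solvable \emph{and} that the Nash--Moser scheme converge jointly dictates the codimension, forcing the stated value of $m$.

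The main obstacle -- the heart of Nash's 1956 paper -- is exactly this interplay between an algebraic dimension count ensuring pointwise solvability of the linearisation and the delicate analytic machinery needed to make the smoothed Newton iteration converge. Once that machinery is in place, the geometric ingredients (Whitney reduction, scaling to shortness, spiral perturbations) assemble into the theorem in a relatively transparent way; without it, no finite codimension suffices.
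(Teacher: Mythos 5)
The paper itself gives no proof of this statement: it is Nash's 1956 theorem, quoted verbatim with the reference \cite{nash}, so the only fair comparison is with Nash's original argument. Your outline correctly names the main ingredients of that argument (a Whitney embedding scaled to be short, reduction to realizing the positive-definite deficit $h=g-f_0^{*}\langle\cdot,\cdot\rangle$, decomposition of $h$ into primitive forms $a_k^2\,d\varphi_k\otimes d\varphi_k$, and an iteration controlled by smoothing operators), but as a proof it has genuine gaps. The central one is that you invoke ``Nash's hard implicit function theorem'' as a black box, when that perturbation machinery is precisely the content of the theorem you are proving: it was created in \cite{nash} for this purpose, so a proof must actually construct the smoothed Newton scheme, prove the tame estimates for the solution operator of the linearized system $\partial_i f\cdot\partial_j v+\partial_j f\cdot\partial_i v=h_{ij}$ (which requires adjoining auxiliary normal directions so that this algebraic system is solvable pointwise along the whole iteration, not just at the first step), and establish convergence in the relevant scale of norms. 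None of this is carried out, and it cannot be cited away.

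The second gap is the dimension count. The specific value $m=\tfrac{n}{2}(n+1)(3n+11)$ is asserted to follow from ``careful book-keeping,'' but no count is given. In Nash's argument the codimension arises from the $\tfrac{n(n+1)}{2}$ independent components of the metric, the extra directions needed to keep the linearization solvable during the iteration, and, crucially, the passage from the compact case (where $\tfrac{n}{2}(3n+11)$ suffices) to open manifolds via an exhaustion and a decomposition of coordinate patches into finitely many families; that last step is where the additional factor producing $\tfrac{n}{2}(n+1)(3n+11)$ enters, and your sketch does not address noncompactness at all. Finally, two smaller points: the high-frequency spiral realizes each primitive form only up to $O(\lambda^{-1})$, and in the $C^\infty$ setting these errors cannot simply be discarded (that shortcut yields only the $C^1$ Nash--Kuiper statement) but must be fed back into the iteration, returning you to the unproven analytic core; and the theorem claims an \emph{embedding}, so injectivity and properness of the final map need an argument beyond the immersion computation, especially in the noncompact case.
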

 
 For example, Nash's theorem implies that every 3-dimensional Riemannian manifold can be isometrically embedded in $\mathbb E^{120}$ with codimension 117.
 
 The embedding problem had been around since Bernhard Riemann (1826--1866) and was posed explicitly by Ludwig Schl\"afli (1814-1895) in the 1870s. The problem  has evolved naturally from a progression of other questions that had been
posed and partly answered beginning in the middle of 19th century via the work of Jean F. Frenet (1816-1900), Joseph A. Serret (1819-1885), Louis S.\,X. Aoust (1814-1885); and then by Maurice Janet (1888-1984) and \'Elie Cartan (1869--1951) in 1920s.
  First, mathematicians studied the embedding problem of ordinary curves, then of surfaces, and finally, of Riemannian manifolds of higher dimensions.
 
John Conway (1937-- ),  the Princeton mathematician who discovered surreal numbers and invented the game of life  (known simply as  Life), called Nash's result  
\vskip.05in

``{\it one of the most important pieces of mathematical 
analysis in the 20th

\  century\,}''.
 
\noindent Also,  according to Shlomo Sternberg (1936-- ) of Harvard University,
\vskip.05in

``{\it The embedding problem is a deep philosophical question concerning the 

\ \ foundations of geometry that virtually every mathematician\,--\,from 

\ \ Riemann and  Hilbert  to \'Elie Cartan and Hermann Weyl\,--\,working  in 
 
 \ \  the field  of differential geometry for the past century has asked himself.}''
 \vskip.05in

\noindent  (Quoted from the book: ``A Beautiful Mind; The life of mathematical genius and 
  Nobel laureate John Nash'' by Sylvia Nasar, 1998.)
 
\subsection{Why is it so difficult to apply Nash's embedding theorem\,?}

 Nash's embedding theorem was aimed for in the hope that if Riemannian  manifolds could always be regarded as Riemannian submanifolds, this would then yield the opportunity to use extrinsic
help.  Till when observed in  \cite{gromov2} as such by Mikhail L. Gromov (1943-- ), this hope had not been materialized however. 

There were several reasons why it is so difficult to apply Nash's theorem. One reason is that it requires very large codimension  for a Riemannian manifold to admit an isometric embedding in Euclidean spaces in general, as stated in Nash's theorem. On the other hand, submanifolds of higher codimension are very difficult to understand, e.g., there are no general results {for arbitrary Riemannian submanifolds}, except the three fundamental equations of Gauss, Codazzi and Ricci.

Another reason, for instance, was explained in  \cite{Y} as  follows.
\vskip.05in

{\it ``\,What is lacking in the Nash theorem is the control of the extrinsic 

\ \ quantities in relation to the intrinsic quantities.''}

\vskip.05in
 In other words, another  main difficulty to apply Nash's  theorem is: 
 
 \vskip.05in
``{\it There  do not exist general optimal relationships between the known 

\ \ intrinsic  invariants and the main extrinsic invariants for arbitrary 
   
   \ \ Riemannian submanifolds of Euclidean spaces.}''
 \vskip.05in

\subsection{Obstructions to minimal immersions}
Since there are no obstructions to isometric embeddings according to Nash's  theorem, in order to study  isometric embedding (or more generally, immersion) problems,   it is natural
    to impose  some suitable conditions on immersions. 
 
  For example, if one  imposes the {\it minimality condition} it leads to the following problem proposed by Shiing-Shen Chern (1911-2004) during the 1960s.
  \vskip.05in

{\bf Problem 1}: {\it What are necessary  conditions for a Riemannian manifold to admit a minimal isometric immersion into a Euclidean space\,?}
  \vskip.05in

The equation of Gauss states that the Riemann curvature tensor $R$ and the second fundamental form $h$ of a Riemannian submanifold  in a Euclidean space satisfy
 $$R(X,Y;Z,W)=\left<h(X,W),h(Y,Z)\right>-\left<h(X,Z),h(Y,W)\right>.$$
It follows immediately from the  equation of Gauss that a necessary condition for a Riemannian manifold to admit a minimal immersion in any Euclidean space  is 
   $$ Ric\,\leq 0,\;\; \text{(in particular,  $\tau\leq 0$)},$$
   where $Ric$ and $\tau$ are the Ricci and scalar curvatures of the submanifold.

{\it For many  years}, before the invention of $\delta$-invariants, {\it this was  the only known  Riemannian obstruction} for a general Riemannian manifold to admit a minimal immersion into a Euclidean space with arbitrary codimension.

\subsection{Obstructions to Lagrangian immersions}

A submanifold $M$ of a K\"ahler manifold $(\tilde M,J,g)$ is called {\it Lagrangian} if $$J(T_pM)=T^\perp_p M,\;\; \forall p\in M,$$
where $J$ and $g$ are the complex structure and the K\"ahler metric of $\tilde M$.
   
  A result of  Gromov states that a compact $n$-manifold $M$ admits a Lagrangian immersion into the complex Euclidean $n$-space $\hbox{\bf C}^n$ if and
  only if the complexification $TM\otimes \hbox{\bf C}$ of the tangent bundle of $M$ is trivial  \cite{gromov}.
  Gromov's result implies that there are {\it no topological obstructions} 
  to Lagrangian immersions for compact 3-manifolds in $\hbox{\bf C}^3$, because the 
  tangent bundle of every 3-manifold is trivial. 

 On the other hand, if one imposes isometrical condition to the Lagrangian immersion problem, it leads to the following.
\vskip.1in

{\bf {Problem 2}:} {\it
What are the Riemannian obstructions to  Lagrangian isometric immersions of Riemannian $n$-manifolds into ${\bf C}^n$?}

\subsection{What do we need to do to overcome the difficulties\,?}

We ask the following two basic questions:
    \vskip.05in
    
(1)   {\it  What do we need to do to overcome the difficulties mentioned above\,?} 

\hskip.23in  (in order to apply Nash's embedding theorem)
  \vskip.05in
  
 (2) {\it How can we solve Problems 1 and 2 concerning minimal and Lagrangian 
 
 \ \ \ \ \ \   immersions\,?}
  \vskip.05in

\subsection{My answers} My answers to these two basic questions are the following.
 
 \vskip.05in
 (a)  {\it We need to  introduce new type of Riemannian invariants}, {\bf  different in   
 
\hskip.25in  nature}, {\it from  the  ``\,classical\,''  Riemannian invariants; namely, from 

\hskip.25in scalar and Ricci  curvatures} 
   (which have been studied extensively
   
    \hskip.25in  for more than 150 years since Riemann).
 \vskip.05in
 
 (b) {\it We also need to establish general optimal relationships between the main 

\hskip.25in extrinsic  invariants of the submanifolds and the new type of intrinsic

\hskip.25in  invariants. }

\vskip.1in
\subsection{My motivation} These considerations provided me the motivation to introduce $\delta$-invariants for Riemannian manifolds during the 1990s.

\section{How I defined $\delta$-invariants}

Let $M$ be a  Riemannian $n$-manifold and $K(\pi)$ denotes the sectional curvature of a plane section $\pi\subset T_pM,\, p\in M.$
   For an orthonormal basis $\{e_1,\ldots,e_n\}$ of $T_pM$, the {\it scalar curvature $\tau$} at $p$ is given by
 \begin{equation}\label{2.1}\tau(p)=\sum_{i<j} K(e_i\wedge e_j).\end{equation}

 Let $L$ be a subspace of $T_pM$  of dimension $r\geq 2$  and let $\{e_1,\ldots,e_r\}$ be an 
  orthonormal basis of $L$. The {\it scalar curvature $\tau(L)$} of $L$ is defined by    \begin{equation} \label{2.2}\tau(L)=\sum_{\alpha<\beta} K(e_\alpha\wedge e_\beta),\quad 1\leq\alpha<\beta\leq r.\end{equation}
  
\subsection{The set ${\mathcal S}(n)$} For a given integer $n\geq 2$,  denote by ${\mathcal S}(n)$ the 
finite set consisting of all $k$-tuples $(n_1,\ldots,n_k)$ of integers $2\leq n_1,\ldots,n_k\leq n-1$ with $n_1+\cdots+n_k\leq n$,  where $k$ is a non-negative integer.

In number theory and combinatorics, a partition of a positive integer $n$ is a way of writing $n$ as a sum of positive integers.
The  cardinal number $\#\mathcal S(n)$ of $\mathcal S(n)$ is equal to $p(n)-1$, where $p(n)$ is the number of partitions of $n$.  For instance, for
\begin{equation}\begin{aligned}\notag n=\, &2,3,4,5,6,7,8,9,10,
\ldots,
\\& 50,\ldots,100,\ldots,200,\end{aligned}\end{equation}
the corresponding cardinal numbers are given respectively
by
\begin{equation}\begin{aligned}\notag &\hskip.3in \#\mathcal S(n)=1,2,4,6,10,14,21,29,41,\ldots, \\&\hskip.1in  204\, 225,\ldots, 190\,569\,291,\ldots,3\,972\,999\,029\,387.\end{aligned}\end{equation}
 The asymptotic behavior of $\#\mathcal S(n)$ is given by
$$\#{\mathcal S}(n)\approx \text{\small$\frac{1}{4n\sqrt{3}}\exp \left[\sqrt{\tfrac{2n}{3}}\,\pi \right]$}\quad \hbox{as}\;\;n\to\infty.$$

\subsection{Definition of $\delta$-invariants}
For each given $k$-tuple $(n_1,\ldots,n_k)\in {\mathcal S}(n)$,  I defined the $\delta$-invariant  $\delta(n_1,\ldots,n_k)$  by
\begin{align}\label{2.3} & \delta(n_1,\ldots,n_k)=\tau- \inf\{\tau(L_1)+\cdots+\tau(L_k)\},\end{align} where
 $L_1,\ldots,L_k$ run over all $k$ mutually orthogonal subspaces of $T_pM$ such that  $\dim L_j=n_j,\, j=1,\ldots,k$. 
In particular, we have
\vskip.05in 

$\delta(\emptyset)=\tau$ \ \   ($k=0$, the trivial $\delta$-invariant),
\vskip.05in

$\delta{(2)}=\tau-\inf K,$ where $K$ is the sectional curvature,

\vskip.05in
$\delta(n-1)(p)=\max Ric(p)$.
\vskip.1in

\begin{rem}  \label{R:2.1}The non-trivial $\delta$-invariants are {\it very different in nature} from the ``classical'' scalar and Ricci curvatures; simply due to the fact that both scalar and Ricci curvatures are the ``total sum'' of sectional curvatures on a Riemannian manifold.
  In contrast, the non-trivial $\delta$-invariants  are obtained  from the scalar curvature 
  by throwing away a certain amount of sectional curvatures. Consequently, the non-trivial $\delta$-invariants are {\it very weak intrinsic invariants}.

 Borrowing a term from biology,  {\it $\delta$-invariants are  DNA of Rieman\-nian manifolds}. Results in later sections illustrate that all of our $\delta$-invariants do affect directly  the behavior of Riemannian manifolds. \end{rem}
 
\begin{rem} \label{R:2.2} Some other invariants of similar nature, i.e., intrinsic invariants obtained from scalar curvature by throwing away certain amount of sectional curvature, are also  called $\delta$-invariants.  For instance, we also have affine $\delta$-invariants, K\"ahlerian  $\delta$-invariants, submersion $\delta$-invariants, contact $\delta$-invariants, etc. 
Such $\delta$-invariants were introduced while we investigated some special families of manifolds.\end{rem}

\section{What can we  do with $\delta$-invariants\,?} 

To apply Nash's embedding theorem, we ask the following most fundamental question in the theory of Riemannian submanifolds.
\vskip.05in

{\bf Fundamental Question:} 
 {\it What can we conclude from an arbitrary isometric immersion of a Riemannian manifold in a Euclidean space with any codimension\,?} 
\vskip.03in

That is,   {\bf ``\,$\forall$\,arbitrary isometric immersion $\phi: M\to \mathbb E^m\implies ???$\,''}

\vskip.03in
Or, more generally,
\vskip.03in

\noindent {\it What can we conclude from an arbitrary isometric immersion between Riemannian manifolds\,?}
\vskip.03in

That is,  {\bf ``\,$\forall$\,arbitrary isometric immersion $\phi: M\to \tilde M \implies ???$\,''}
\vskip.03in

\begin{rem} For surfaces in $\mathbb E^{3}$, the equation of Gauss was found in 1827 in principal, though not explicitly, by Carl F. Gauss (1777--1855); and the equation of Codazzi was given by
  Delfino Codazzi (1824--1875) in 1860,  independently by Gaspare Mainardi (1800--1879) in 1856 and also by Karl M. Peterson (1828--1881) in his 1853 doctoral thesis [\,\"Uber die Biegung der Fl\"achen,  Dorpat University\,].    In 1880,  Aurel E. Voss (1845--1931) extended both equations to Riemannian submanifolds. The equation of Ricci was discovered by Gregorio Ricci (1853--1925)  in 1899.  

For so many years since then, the {\it only known solutions} to the Fundamental Question were the equations of Gauss, Codazzi and Ricci, as we already said earlier.
\end{rem}

\subsection{A new general solution to Fundamental Question} By applying the $\delta$-invariants, we are able to provide the following new optimal general solution to the Fundamental Question.

\begin{thm}\label{T:3.1} For  any isometric immersion of a  Riemannian $n$-manifold $M$ 
 into another  Riemannian manifold $\tilde M$,   we have 
\begin{equation}\begin{aligned}\label{3.1} & \delta{(n_1,\ldots,n_k)} \leq  \frac{n^2(n+k-1-\sum_{j=1}^k n_j)}{2(n+k-\sum_{j=1}^k n_j)} H^2 \\&\hskip.6in +\frac{1}{2} \text{\small$ \Bigg\{$}{{n(n-1)}}-\sum_{j=1}^k {n_j(n_j-1)} \text{\small$ \Bigg\}$}\max\tilde K\end{aligned}\end{equation}
 for every $k$-tuple $(n_1,\ldots,n_k)$ $\in \mathcal S(n)$, where $H^{2}$ denotes the squared mean curvature and $\max\tilde K(p)$ is the maximum of  sectional curvatures of $\tilde M$ restricted to 2-plane sections of $T_pM$.
  
  The equality case of inequality \eqref{3.1} holds at a point $p\in M$ if and only if the following two conditions hold:
\begin{itemize}
 
  \item[{\rm (a)}]  there exists an  orthonormal basis  $\{e_1,\ldots,e_m\}$ of $\,T_{p}M$ such that  the shape operator $A$ at $p$ takes the form:
\begin{align}\label{13.30}\font\b=cmr10 scaled \magstep1 \def\bigzerol{\smash{\hbox{ 0}}} \def\bigzerou{\smash{\lower.0ex\hbox{\b 0}}} A_{e_r}=\text{\small$\left( \begin{matrix} A^r_{1} & \hdots & 0 \\ \vdots  & \ddots& \vdots &\bigzerou \\ 0 &\hdots &A^r_k& \\&\bigzerou & &\mu_rI \end{matrix} \right)$},\quad  r=n+1,\ldots,m, \end{align}
where $I$ is an identity matrix and  $A^r_j$ is a symmetric $n_j\times n_j$  submatrix satisfying
$\hbox{\rm trace}\,(A^r_1)=\cdots=\hbox{\rm trace}\,(A^r_k)=\mu_r.$

\item[{\rm (b)}] for any $k$ mutual orthogonal subspaces $L_1,\ldots,L_k$ of $T_pM$ satisfying
$$\delta(n_1,\ldots,n_k)=\tau-\text{\small$\sum$}_{j=1}^k \tau (L_j)$$ at $p$, we have
$ \tilde K(e_{\alpha_i},e_{\alpha_j})=\max \tilde K(p)$
 for any $\alpha_i\in \Delta_i,\alpha_j\in \Delta_j$ with $1\leq i\ne j\leq k+1$, where $\Delta_{1},\ldots,\Delta_{k+1}$ are given by
 \begin{equation}\begin{aligned}\notag &\Delta_1=\{1,\ldots, n_1\},\; \ldots \\&\Delta_{k}=\{n_1\!+ \cdots+\! n_{k-1}\!+\!1,\ldots, n_1\!+\cdots+\! n_k\},\\& \Delta_{k+1}=\{n_1\!+\cdots+\!n_{k}\!+\!1,\ldots, n\}.\end{aligned}\end{equation}
\end{itemize}
\end{thm}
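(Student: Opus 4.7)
I begin at a point $p\in M$ with an arbitrary admissible family $L_1,\ldots,L_k\subset T_pM$ of mutually orthogonal subspaces of dimensions $n_1,\ldots,n_k$, and put $L_{k+1}:=(L_1\oplus\cdots\oplus L_k)^\perp$. Choose an orthonormal basis $\{e_1,\ldots,e_n\}$ of $T_pM$ compatible with the index blocks $\Delta_1,\ldots,\Delta_{k+1}$ (so $e_\alpha\in L_l$ for $\alpha\in\Delta_l$), complete it to an orthonormal normal frame $\{e_{n+1},\ldots,e_m\}$, and write $h^r_{ij}:=\langle h(e_i,e_j),e_r\rangle$. The Gauss equation then yields
\begin{equation*}
K(e_i\wedge e_j)=\tilde K(e_i\wedge e_j)+\sum_{r=n+1}^{m}\bigl(h^r_{ii}h^r_{jj}-(h^r_{ij})^2\bigr).
\end{equation*}

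\textbf{Summation over cross-pairs.} Let $\mathcal{C}$ be the set of pairs $\{i,j\}$, $i<j$, not both lying in any one $L_l$ with $l\le k$; a direct count gives $|\mathcal{C}|=\tfrac12\bigl(n(n-1)-\sum_j n_j(n_j-1)\bigr)$, and
\begin{equation*}
\tau(p)-\sum_{l=1}^{k}\tau(L_l)=\sum_{\{i,j\}\in\mathcal{C}}K(e_i\wedge e_j).
\end{equation*}
Substituting the Gauss equation and dominating the ambient contribution by $|\mathcal{C}|\max\tilde K(p)$ already reproduces the $\max\tilde K$-summand of \eqref{3.1}, and the remaining task is to show that $\sum_r\sum_{\mathcal{C}}\bigl(h^r_{ii}h^r_{jj}-(h^r_{ij})^2\bigr)\le\frac{n^2(n+k-1-\sum n_j)}{2(n+k-\sum n_j)}H^2$.

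\textbf{Algebraic heart.} The terms $-(h^r_{ij})^2$ are non-positive, so discarding them only enlarges the sum. For each fixed $r$, set $a_i:=h^r_{ii}$ and $s_l:=\sum_{\alpha\in\Delta_l}a_\alpha$; expanding by blocks one verifies the identity
\begin{equation*}
\sum_{\{i,j\}\in\mathcal{C}}a_ia_j=\tfrac12\Bigl(\textstyle\sum_i a_i\Bigr)^2-\tfrac12\sum_{l=1}^{k}s_l^2-\tfrac12\sum_{\alpha\in\Delta_{k+1}}a_\alpha^2.
\end{equation*}
Cauchy--Schwarz on $\Delta_{k+1}$ (of size $d:=n-\sum n_j$) gives $\sum_{\alpha\in\Delta_{k+1}}a_\alpha^2\ge s_{k+1}^2/d$, and a one-line Lagrange (or weighted Cauchy--Schwarz) minimization of $\sum_{l\le k}s_l^2+s_{k+1}^2/d$ under the constraint $s_1+\cdots+s_{k+1}=\sum_i a_i$ delivers the sharp algebraic lemma
\begin{equation*}
\sum_{\{i,j\}\in\mathcal{C}}a_ia_j\le\frac{n+k-1-\sum_j n_j}{2(n+k-\sum_j n_j)}\Bigl(\textstyle\sum_i a_i\Bigr)^2.
\end{equation*}
Summing over $r$ and invoking $\sum_r\bigl(\sum_i h^r_{ii}\bigr)^2=n^2H^2$ produces the missing $H^2$-summand; taking the infimum over admissible families $(L_l)$ then converts the resulting estimate into the claimed bound on $\delta(n_1,\ldots,n_k)$.

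\textbf{Equality.} Equality in \eqref{3.1} at $p$ forces each relaxation above to be tight simultaneously. Sharpness of the ambient estimate yields $\tilde K(e_{\alpha_i},e_{\alpha_j})=\max\tilde K(p)$ on every cross-pair, which is condition (b); the discarded off-diagonal terms vanish, giving $h^r_{ij}=0$ whenever $\{i,j\}\in\mathcal{C}$ and all $r$, which produces the block-diagonal shape of \eqref{13.30}; Cauchy--Schwarz equality forces $h^r_{\alpha\alpha}$ to be constant on $\Delta_{k+1}$, say equal to a scalar $\mu_r$; and the Lagrange step forces $\hbox{\rm trace}\,(A^r_l)=\mu_r$ for $l=1,\ldots,k$, completing (a). The principal obstacle in this programme is the algebraic lemma: extracting the sharp constant $\tfrac{n+k-1-\sum n_j}{2(n+k-\sum n_j)}$ and its exact equality case requires the careful combination of Cauchy--Schwarz on $\Delta_{k+1}$ with a constrained minimization over the block sums, but once this lemma is in hand the geometric conclusions drop out by reading the equality conditions back through the Gauss equation.
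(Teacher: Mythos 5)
The paper itself states Theorem \ref{T:3.1} without proof (it is a survey; the argument lives in the cited sources \cite{c7,c2000,book}), so your proposal must be measured against the standard proof there. Your inequality argument is correct and is in substance that proof: you decompose $\tau-\sum_j\tau(L_j)$ over the pairs not contained in any of the blocks $\Delta_1,\ldots,\Delta_k$, insert the Gauss equation, bound the ambient contribution by $\max\tilde K$ times the number of such pairs, and your block identity together with Cauchy--Schwarz gives $\sum_{\mathcal C}a_ia_j\le\frac{\gamma-1}{2\gamma}\bigl(\sum_ia_i\bigr)^2$ with $\gamma=n+k-\sum_jn_j$, which is exactly the content of Chen's algebraic lemma. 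Two small points should be made explicit: when $\sum_jn_j=n$ the block $\Delta_{k+1}$ is empty, so the term $s_{k+1}^2/d$ must simply be dropped (the weighted Cauchy--Schwarz then gives the constant with $\gamma=k$); and the passage from the per-family estimate to $\delta(n_1,\ldots,n_k)$ is a supremum over admissible families, harmless here because the right-hand side does not depend on the family, but for the equality analysis you also need an optimal family attaining the infimum in the definition of $\delta$ (compactness of the set of admissible $(L_1,\ldots,L_k)$ gives this), and the argument must be run for every optimal family to obtain (b) as quantified in the statement.

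The genuine gap is in the equality case. First, you prove only necessity: the theorem is an ``if and only if'', and the converse --- that (a) and (b) force equality in \eqref{3.1} --- is never addressed. It is a direct verification with the machinery you already set up: inserting \eqref{13.30} into your block identity gives $\sum_{\mathcal C}h^r_{ii}h^r_{jj}=\tfrac{1}{2}\gamma(\gamma-1)\mu_r^2$ and $n^2H^2=\gamma^2\sum_r\mu_r^2$, so the algebraic bound is attained; but this has to be written out. Second, your necessity argument actually yields a condition stronger than (b): your set $\mathcal C$ contains the pairs lying inside $\Delta_{k+1}$, so tightness of the ambient estimate forces $\tilde K(e_u\wedge e_v)=\max\tilde K(p)$ also for $u\ne v$ both in $\Delta_{k+1}$, which is not literally part of (b) (that condition only concerns indices from two different blocks). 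You assert that what you derived ``is condition (b)''; it is more than (b), and the discrepancy is exactly what matters for the missing converse: for a general ambient manifold, (a) and (b) alone do not obviously force equality when $\dim(L_1\oplus\cdots\oplus L_k)^\perp\ge 2$, since the sectional curvatures of planes inside the complementary block also enter with coefficient $\max\tilde K$ in your chain of estimates. So either record the stronger necessary condition your computation gives and use it in the converse, or explain how the stated (a)--(b) are to be read so that both directions go through; as written, the equality characterization is incomplete.
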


\begin{dfn}\label{D:3.1}For each $k$-tuple $(n_1,\ldots,n_k)\in \mathcal S(n)$, we define the {\it normalized $\delta$-invariant}
 $\Delta(n_1,\ldots,n_k)$ of a Riemannian $n$-manifold by
 \begin{equation}\label{3.3}\Delta(n_1,\ldots,n_k)=
\frac{\delta(n_1,\ldots,n_k)}{c(n_1,\ldots,n_k)},\; \; c(n_1,\ldots,n_k)= \frac{n^2(n\!+\!k\!-\!1\!-\!\sum_{j=1}^k n_j)}{2(n+k-\sum_{j=1}^k n_j)}.\end{equation}\end{dfn}

When the ambient space $\tilde M$ is a Euclidean space, Theorem \ref{T:3.1} becomes

\begin{thm}\label{T:3.2} For any isometric immersion of a Riemannian $n$-manifold $M$ into a Euclidean space with arbitrary codimension, we have
     \begin{align}\label{Ideal2}& H^2 \geq \Delta(n_1,\ldots,n_k)\end{align}   
for every $(n_1,\ldots,n_k)\in \mathcal S(n)$.  

  The equality case of  \eqref{Ideal2} holds at  $p\in M$ if and only if 
 there is an  orthonormal basis  $\{e_1,\ldots,e_m\}$ of $T_{p}M$ such that  the shape operator $A$ at $p$ takes the form:
\begin{align}\label{3,5}\font\b=cmr10 scaled \magstep1 \def\bigzerol{\smash{\hbox{ 0}}} \def\bigzerou{\smash{\lower.0ex\hbox{\b 0}}} A_{e_r}=\text{\small$\left( \begin{matrix} A^r_{1} & \hdots & 0 \\ \vdots  & \ddots& \vdots &\bigzerou \\ 0 &\hdots &A^r_k& \\&\bigzerou & &\mu_rI \end{matrix} \right)$},\quad  r=n+1,\ldots,m, \end{align}
where $I$ is an identity matrix and  $A^r_j$ is a symmetric $n_j\times n_j$  submatrix which satisfy 
$\hbox{\rm trace}\,(A^r_j)=\mu_r$ for $j=1,\ldots,k$.

\end{thm}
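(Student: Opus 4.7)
My plan is to deduce Theorem \ref{T:3.2} directly from Theorem \ref{T:3.1} by specializing the ambient manifold to a Euclidean space. Since $\mathbb E^m$ is flat, every sectional curvature of the ambient vanishes identically, so $\max\tilde K(p)=0$ at each point $p\in M$. Substituting this into the right-hand side of \eqref{3.1} annihilates the second summand and leaves
$$\delta(n_1,\ldots,n_k)\,\le\, \frac{n^2(n+k-1-\sum_{j=1}^k n_j)}{2(n+k-\sum_{j=1}^k n_j)}\,H^2 \,=\, c(n_1,\ldots,n_k)\,H^2,$$
where $c(n_1,\ldots,n_k)$ is exactly the normalizing constant appearing in Definition \ref{D:3.1}.

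Next I would verify that $c(n_1,\ldots,n_k)>0$, so that division is legitimate. For $(n_1,\ldots,n_k)\in\mathcal S(n)$ we have $2\le n_j\le n-1$ and $\sum_j n_j\le n$, whence $n+k-\sum_j n_j\ge k$ and $n+k-1-\sum_j n_j\ge k-1$; a short inspection of the extremal cases (including the trivial $k=0$, where $c=n(n-1)/2$) confirms $c>0$ whenever $n\ge 2$. Dividing then yields $H^2\ge \delta(n_1,\ldots,n_k)/c(n_1,\ldots,n_k)=\Delta(n_1,\ldots,n_k)$, which is precisely the inequality \eqref{Ideal2}.

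For the equality discussion I would simply transcribe conditions (a) and (b) of Theorem \ref{T:3.1} in the Euclidean setting. Condition (a) is independent of the ambient and reproduces verbatim the block-diagonal shape-operator description \eqref{3,5} together with the trace identities $\mathrm{trace}\,(A^r_j)=\mu_r$. Condition (b) demands $\tilde K(e_{\alpha_i},e_{\alpha_j})=\max\tilde K(p)$ across any pair of distinct index blocks; in $\mathbb E^m$ both sides are identically zero, so (b) is automatically satisfied and imposes nothing beyond (a). Hence equality at $p$ is equivalent to condition (a) alone, exactly as claimed. No step presents a genuine obstacle here: the inequality and its equality characterization are inherited by pure substitution, since all the real analytical work has already been carried out inside Theorem \ref{T:3.1}.
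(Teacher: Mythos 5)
Your proposal is correct and follows exactly the paper's route: the paper obtains Theorem \ref{T:3.2} by specializing Theorem \ref{T:3.1} to a flat ambient space, where $\max\tilde K=0$ kills the curvature term, division by the positive constant $c(n_1,\ldots,n_k)$ of Definition \ref{D:3.1} gives \eqref{Ideal2}, and condition (b) of the equality case becomes vacuous. Your extra check that $c(n_1,\ldots,n_k)>0$ (using $n_j\le n-1$ in the case $k=1$, $\sum_j n_j=n$ being excluded) is a sound, if routine, addition.
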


\begin{rem} For each $(n_1,\ldots,n_k)\in \mathcal S(n)$, inequality \eqref{Ideal2} is optimal, since  there exists a non-minimal submanifold satisfying the equality case. \end{rem}

\begin{rem} Due to the well-known fact that the mean curvature vector field  of an isometric immersion is exactly the tension field, Theorem \ref{T:3.2} shows that the amount of tension a submanifold  receives at each point from its ambient space  is {\it predominated below by its $\delta$-invariants.} Consequently, each $\delta$-invariant does affect directly the behavior of Riemannian manifolds.\end{rem}

Theorem  \ref{T:3.2} implies immediately the following.

\begin{cor} For every isometric immersion  of a Riemannian $n$-manifold into a Euclidean space  with arbitrary codimension, we have
\begin{align}\label{3.6} H^2(p)\geq \hat\Delta_0(p),\end{align} 
where $\hat\Delta_{0}:=\max \{\Delta(n_1,\ldots,n_k): (n_1,\ldots,n_k)\in {\mathcal S}(n)\} $ is the  maximal normalized $\delta$-invariant.
\end{cor}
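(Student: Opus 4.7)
The plan is immediate, since this corollary is essentially a pointwise maximization of the family of inequalities given by Theorem \ref{T:3.2}. First I would fix a point $p\in M$ and invoke Theorem \ref{T:3.2} to obtain the inequality
\[
H^2(p)\geq \Delta(n_1,\ldots,n_k)(p)
\]
for \emph{every} $k$-tuple $(n_1,\ldots,n_k)\in \mathcal S(n)$. The left-hand side depends only on the immersion at $p$ and is independent of the chosen $k$-tuple, so the inequality persists when the right-hand side is replaced by its supremum over $\mathcal S(n)$.

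Next I would observe that the set $\mathcal S(n)$ is \emph{finite}, as already noted in Section 2 (its cardinality is $p(n)-1$). Consequently the supremum is in fact a maximum, attained at some $k$-tuple $(n_1^\ast,\ldots,n_k^\ast)\in\mathcal S(n)$. By the very Definition \ref{D:3.1} of $\hat\Delta_0(p)$ as this maximum, the desired inequality $H^2(p)\geq \hat\Delta_0(p)$ follows.

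There is no real obstacle here: the result is a purely set-theoretic consequence of Theorem \ref{T:3.2}. The only minor point to make explicit is the finiteness of $\mathcal S(n)$, which guarantees that $\hat\Delta_0$ is a well-defined pointwise function on $M$ (rather than a supremum that might fail to be attained or that might behave badly under variation of $p$).
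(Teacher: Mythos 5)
Your argument is correct and coincides with the paper's: the corollary is stated there as an immediate consequence of Theorem \ref{T:3.2}, obtained exactly as you do by taking the maximum of $\Delta(n_1,\ldots,n_k)$ over the finite set $\mathcal S(n)$ at each point. Your explicit remark on the finiteness of $\mathcal S(n)$ is a harmless (and reasonable) elaboration of the same one-line deduction.
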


\subsection{Maximum principle}

In general, there do not exist direct relationships between  $\delta$-invariants.  On the other hand, we have the following maximum principle, which follows immediately from Theorem \ref{T:3.2}.

\vskip.1in
\noindent {\bf Maximum Principle.} {\it If an $n$-dimensional submanifold $M$ of a Euclidean space satisfies the equality case of \eqref{Ideal2} for some $k$-tuple $(n_1,\ldots,n_k)\in {\mathcal S}(n)$, i.e.,
\begin{align} \label{3.7} H^2=\Delta(n_1,\ldots,n_k),\end{align} 
 then for every $(m_1,\ldots,m_j)\in {\mathcal S}(n)$ and any $j$ we have}
\begin{align} \label{3.89}\Delta(n_1,\ldots,n_k)=\hat\Delta_{0}\geq\Delta(m_1,\ldots,m_j).\end{align}

\section{Ideal immersions and best ways of living} 

 What is a ``{\bf nice immersion}'' of a Riemannian manifold\,? In my opinion, it is an isometric immersion which produces the least possible amount of tension at each point. For this reason I introduced the notion of ideal immersions in the 1990s.
 
\begin{dfn}\label{D:4.1} An isometric immersion of a Riemannian $n$-manifold into a Euclidean space is called {\it ideal\/} if $H^2= \hat\Delta_0$ holds identically. \end{dfn}
 
 The  Maximum Principle yields the following important fact.

\begin{thm} \label{T:4.1} If an isometric immersion of a Riemannian $n$-manifold $M$ in a Euclidean space satisfies $H^2=\Delta(n_1,\ldots,n_k)$ identically for a $k$-tuple $(n_1,\ldots,n_k)$ in $ {\mathcal S}(n)$, then it is an ideal immersion automatically. \end{thm}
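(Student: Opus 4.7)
The plan is to observe that Theorem \ref{T:4.1} is essentially an immediate pointwise corollary of the Maximum Principle stated just before Section 4, combined with the very definition of an ideal immersion. So my proof will simply unpack what the hypothesis says at each point and invoke the already-established tools.

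First I would fix an arbitrary point $p \in M$. By hypothesis, $H^2(p) = \Delta(n_1,\ldots,n_k)(p)$, which is precisely the equality case \eqref{3.7} of inequality \eqref{Ideal2} at $p$ for the given $k$-tuple $(n_1,\ldots,n_k) \in \mathcal{S}(n)$. The Maximum Principle (applied pointwise at $p$) then yields
\[
\Delta(n_1,\ldots,n_k)(p) \;=\; \hat\Delta_0(p) \;\geq\; \Delta(m_1,\ldots,m_j)(p)
\]
for every $(m_1,\ldots,m_j) \in \mathcal{S}(n)$. Combining this with the hypothesis gives $H^2(p) = \hat\Delta_0(p)$.

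Since $p \in M$ was arbitrary, $H^2 = \hat\Delta_0$ holds identically on $M$, and therefore the immersion is ideal in the sense of Definition \ref{D:4.1}. There is really no obstacle here beyond correctly noting that the Maximum Principle, though stated for the fixed $k$-tuple appearing in the hypothesis, is a pointwise statement, so applying it at every $p \in M$ promotes the identity $H^2 = \Delta(n_1,\ldots,n_k)$ to the stronger identity $H^2 = \hat\Delta_0$ without further work. The only subtle point worth flagging is that the maximizer defining $\hat\Delta_0(p)$ could in principle depend on $p$, but this does not matter: at each $p$ the Maximum Principle forces the distinguished $k$-tuple to realize the maximum, which is all that is needed.
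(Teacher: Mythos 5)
Your argument is correct and is exactly the paper's route: the theorem is stated there as an immediate consequence of the Maximum Principle, which (applied pointwise, as you note) upgrades $H^2=\Delta(n_1,\ldots,n_k)$ to $H^2=\hat\Delta_0$ identically, so the immersion is ideal by Definition \ref{D:4.1}. Your remark that the pointwise dependence of the maximizing tuple on $p$ is harmless is a fair clarification, but it adds nothing beyond what the paper's one-line deduction already contains.
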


\begin{rem} \label{R:4.1} Theorem \ref{T:3.2} implies that ideal submanifolds are those which receive the {\it least amount of tension at each point}.  \end{rem}

In the following, by a {\it best world} we mean a  Riemannian space with the highest degree of homogeneity.

 \begin{rem} \label{R:4.2} According to the work of Sophus Lie (1842-1899), Felix  Klein (1849-1925) and Wilhelm Killing (1847-1923), the family of best worlds consists of Euclidean spaces, spheres, real projective spaces, and real hyperbolic
spaces, i.e., the family of real space forms. These spaces have the highest degree of homogeneity, since their groups of isometries have the maximal possible dimension.  \end{rem}
 
 \begin{dfn}\label{D:4.2}  A {\it best way of living} is  an ideal isometric  embedding of a Riemannian manifold into a best world. \end{dfn}

\begin{rem} \label{R:4.3}It follows from Definition \ref{D:4.2} that a best way of living is a very comfortable way of living in a wonderful world which allows maximal degree of freedom (living in a best world), it preserves  shape (isometric, no stretching), without self-cutting (embedding). Furthermore it receives the least possible amount of tension at each point from the outside world (ideal).
\end{rem}

\section{Some applications of $\delta$-invariants}  

In this section we provide some of the many applications of $\delta$-invariants.  Many more applications can be found in my 2011 book \cite{book}.

\subsection{Applications to minimal immersions} $\delta$-invariants have many applications to minimal immersions.
As an immediate application of Theorem \ref{T:3.2}, we have the following solution to Problem 1 concerning Riemannian obstructions to minimal immersions.  
  
  \begin{thm}\label{T:5.1}  Let $M$ be a Riemannian $n$-manifold. If there exist a point $p$ and a $k$-tuple $(n_1,\ldots,n_k)\in {\mathcal S}(n)$ with $\delta(n_1,\ldots,n_k)(p)>0$,  
   then $M$  never admits a minimal immersion into  any Riemannian  manifold
     with non-positive sectional curvature. 
In particular, $M$ never admits  a minimal immersion into any Euclidean space for any codimension.\end{thm}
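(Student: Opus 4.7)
The plan is to derive Theorem~\ref{T:5.1} as a direct contrapositive of Theorem~\ref{T:3.1}. Suppose, toward contradiction, that $M$ admits a minimal isometric immersion $\phi\colon M \to \tilde M$ into a Riemannian manifold $\tilde M$ whose sectional curvature is non-positive. At the point $p$ and for the given $k$-tuple $(n_1,\ldots,n_k) \in \mathcal{S}(n)$, apply inequality \eqref{3.1} of Theorem~\ref{T:3.1}.

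Minimality of $\phi$ forces $H(p) = 0$, so the first term on the right-hand side of \eqref{3.1} vanishes. For the second term, the hypothesis on $\tilde M$ gives $\max \tilde K(p) \le 0$, so it suffices to check that the prefactor $n(n-1) - \sum_{j=1}^{k} n_j(n_j-1)$ is non-negative. This is a short combinatorial verification: since $(n_1,\ldots,n_k) \in \mathcal{S}(n)$ one has $n_j \le n-1$ and $\sum_j n_j \le n$, hence
\[
\sum_{j=1}^{k} n_j(n_j-1) \;\le\; (n-1)\sum_{j=1}^{k} n_j \;\le\; n(n-1).
\]
Therefore the whole right-hand side of \eqref{3.1} is $\le 0$, which forces $\delta(n_1,\ldots,n_k)(p) \le 0$, contradicting the standing hypothesis $\delta(n_1,\ldots,n_k)(p) > 0$. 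Consequently no such minimal immersion can exist.

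The ``in particular'' assertion is the special case $\tilde M = \mathbb{E}^m$, for which $\tilde K \equiv 0$ is trivially non-positive, so the same argument applies with any codimension $m - n$.

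There is essentially no obstacle: all the substantive work was done in establishing Theorem~\ref{T:3.1}, and the only point requiring attention here is the non-negativity of the coefficient multiplying $\max \tilde K$, which is the elementary inequality displayed above. The argument is robust enough that it also handles ambient spaces with $\max \tilde K \le 0$ only at $p$, which is mildly stronger than the stated hypothesis.
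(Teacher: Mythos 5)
Your argument is correct and is exactly the paper's intended proof: the theorem is stated there as an immediate consequence of the general inequality, and your application of Theorem~\ref{T:3.1} at $p$ with $H(p)=0$, $\max\tilde K(p)\le 0$, together with the elementary check that $n(n-1)-\sum_j n_j(n_j-1)\ge 0$ for $(n_1,\ldots,n_k)\in\mathcal S(n)$, is precisely that deduction. No issues.
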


\subsection{Applications to spectral theory}

By applying Theorem \ref{T:3.2} and Nash's embedding theorem, we discovered the following intrinsic result. 
 
 \begin{thm} \label{T:5.2} Let $M$ be a compact irreducible homogeneous Riemannian $n$-manifold. Then the first nonzero eigenvalue  $\lambda_1$ of  the
 Laplacian $\Delta$  satisfies
  \begin{align}\label{5.1}  \lambda_1\geq n \,  \Delta(n_1,\ldots,n_k)\end{align}
for every $k$-tuple $(n_1,\ldots,n_k)\in \mathcal S(n)$. 
\end{thm}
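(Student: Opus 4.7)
The plan is to combine Theorem \ref{T:3.2} with the classical ``standard first eigenmap'' construction for compact irreducible homogeneous Riemannian manifolds, which realizes $M$ as a minimal submanifold of a round sphere in Euclidean space whose radius is determined by $\lambda_1$.

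First, I would recall the following construction. Let $V_{\lambda_1}\subset C^\infty(M)$ denote the $\lambda_1$-eigenspace of the Laplacian, and pick an $L^2$-orthonormal basis $f_1,\ldots,f_m$ of $V_{\lambda_1}$. Form the map
\[
\phi=(f_1,\ldots,f_m):M\longrightarrow\mathbb E^m.
\]
Since $M$ is homogeneous, $|\phi|^2$ and the symmetric $2$-tensor $\sum_i df_i\otimes df_i$ are invariant under the isometry group of $M$; since $M$ is irreducible, Schur's lemma forces $\sum_i df_i\otimes df_i=c\,g$ for some constant $c>0$. Rescaling $\phi$ by $\sqrt{1/c}$ yields an isometric immersion (in fact embedding) of $M$ into $\mathbb E^m$ whose image lies on a sphere centered at the origin.

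Next I would apply Takahashi's theorem: the coordinate functions of the rescaled $\phi$ still satisfy $\Delta\phi=\lambda_1\phi$, so $\phi$ is a minimal isometric immersion of $M$ into the sphere $S^{m-1}(r)\subset\mathbb E^m$, where the radius $r$ is determined by the Beltrami formula $\Delta\phi=-n\vec H_{\mathbb E^m}$. Combining $\Delta\phi=\lambda_1\phi$ with the fact that for a minimal submanifold of $S^{m-1}(r)$ the mean curvature vector in $\mathbb E^m$ equals $-\phi/r^2$ yields $r^2=n/\lambda_1$ and consequently
\[
H^2=\frac{1}{r^2}=\frac{\lambda_1}{n}
\]
identically on $M$.

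Finally, Theorem \ref{T:3.2} applied to this isometric immersion $\phi:M\to\mathbb E^m$ gives, for every $(n_1,\ldots,n_k)\in\mathcal S(n)$,
\[
\frac{\lambda_1}{n}=H^2\;\geq\;\Delta(n_1,\ldots,n_k),
\]
which is exactly the asserted inequality \eqref{5.1}. The main technical point, and the place where irreducibility and homogeneity are essential, is the Schur-lemma step that produces the isometric (rather than merely conformal) first eigenmap; once this is in hand, Takahashi's identification of the mean curvature in $\mathbb E^m$ with $\lambda_1/n$ and the $\delta$-invariant inequality of Theorem \ref{T:3.2} do the rest without further calculation.
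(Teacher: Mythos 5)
Your proof is correct, and it is essentially the standard argument behind this theorem; note, however, that the paper itself gives no proof of Theorem \ref{T:5.2}, saying only that the result was obtained ``by applying Theorem \ref{T:3.2} and Nash's embedding theorem.'' Your route replaces Nash's theorem by the intrinsically constructed first standard immersion: orthonormal $\lambda_1$-eigenfunctions yield, after the Schur-lemma rescaling, an isometric immersion which by Takahashi's theorem is minimal in a hypersphere centered at the origin, so $H^2=\lambda_1/n$ identically, and Theorem \ref{T:3.2} then gives $\lambda_1/n=H^2\geq \Delta(n_1,\ldots,n_k)$. This substitution is not merely cosmetic: an arbitrary Nash embedding provides no upper bound of $H^2$ in terms of $\lambda_1/n$ (the classical Reilly-type estimate $\lambda_1\leq \frac{n}{\mathrm{vol}(M)}\int_M H^2\,dV$ goes in the opposite direction), so the eigenmap immersion with $H^2=\lambda_1/n$ is precisely what makes the inequality come out in the stated direction; this is also how the theorem is established in the works the survey cites, so your argument is the intended one in substance. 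Two minor caveats: the Schur-lemma step requires ``irreducible'' to be read as irreducibility of the isotropy representation (the intended meaning here), which is what forces the invariant tensor $\sum_i df_i\otimes df_i$ to be a constant multiple $c\,g$ with $c>0$ since $\lambda_1>0$ makes the eigenfunctions nonconstant; and your parenthetical claim that the eigenmap is an embedding is neither needed nor justified in general---an isometric immersion suffices for applying Theorem \ref{T:3.2}.
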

 
 If $k=0$, then \eqref{5.1} reduces to the following well-known result of Tadashi Nagano (1930\,-- ) obtained in \cite{nagano}:
 \begin{align}\label{5.2} \lambda_1\geq n\rho,\; \text{where$\;\,\rho=\frac{2\tau} {n(n-1)}\;\,$is the {\it normalized  scalar  curvature}}.\end{align}

Theorem \ref{T:5.2} implies immediately the following.

\begin{cor}\label{C:5.1}  For every compact irreducible homogeneous Riemannian $n$-manifold,
 we have $\lambda_{1} \geq n \hat\Delta_{0}.$ \end{cor}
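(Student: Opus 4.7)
The plan is to deduce Corollary~\ref{C:5.1} as an immediate consequence of Theorem~\ref{T:5.2}. First I would apply Theorem~\ref{T:5.2} to an arbitrary $k$-tuple $(n_1,\ldots,n_k)\in\mathcal{S}(n)$, obtaining
\[
\lambda_1 \;\geq\; n\,\Delta(n_1,\ldots,n_k).
\]
Since the target inequality involves $\hat\Delta_0$ rather than an arbitrary $\Delta$, the natural next step is to vary the $k$-tuple on the right-hand side.

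Because the indexing set $\mathcal{S}(n)$ is finite---its cardinality being $p(n)-1$, as recorded in Section~2---the supremum defining
\[
\hat\Delta_0 \;=\; \max\{\Delta(n_1,\ldots,n_k):(n_1,\ldots,n_k)\in\mathcal{S}(n)\}
\]
is actually attained at some distinguished tuple $(n_1^\ast,\ldots,n_{k^\ast}^\ast)\in\mathcal{S}(n)$. Specializing the preceding inequality to this optimal tuple yields
\[
\lambda_1 \;\geq\; n\,\Delta(n_1^\ast,\ldots,n_{k^\ast}^\ast) \;=\; n\,\hat\Delta_0,
\]
which is exactly the asserted bound.

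There is no genuine obstacle to overcome here: all of the substantive work has been packed into Theorem~\ref{T:5.2}, which itself rests on combining the intrinsic inequality of Theorem~\ref{T:3.2} with Nash's embedding theorem to control the squared mean curvature of an equivariant isometric embedding of $M$ in terms of $\lambda_1$. The corollary merely extracts the strongest statement from the family of inequalities produced by Theorem~\ref{T:5.2} by optimizing over $\mathcal{S}(n)$; for comparison, specializing instead to $k=0$ recovers Nagano's classical bound $\lambda_1 \geq n\rho$ displayed in \eqref{5.2}, so Corollary~\ref{C:5.1} should be read as a uniform sharpening of that result across every admissible $\delta$-invariant.
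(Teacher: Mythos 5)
Your argument is correct and is precisely the paper's route: the paper derives the corollary "immediately" from Theorem~\ref{T:5.2} by taking the strongest of the inequalities $\lambda_1\geq n\,\Delta(n_1,\ldots,n_k)$ over the finite set $\mathcal{S}(n)$, which is exactly your optimization over the tuple attaining $\hat\Delta_0$. Nothing further is needed.
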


\subsection{Who can live in the wonderland of best livings\hskip.004in?}

 Ordinary spheres in Euclidean spaces are the simplest examples of best ways of living.  Besides spheres there are many other homogeneous spaces which admit best ways of living in some Euclidean spaces. 
For instance, the following three compact homogeneous spaces:
\begin{align}\notag SU(3)/T^2,\;\; Sp(3)/Sp(1)^3,\;\; \mbox{and}\;\;  F_4/\mbox{Spin}(8)\end{align} admit best ways of living in $\mathbb  E^8,\, \mathbb E^{14}$ and $\mathbb E^{26}$ associated with $$(3,3)\in{\mathcal S}(6),\; (3,3,3,3)\in {\mathcal S}(12),\;\; (12,12)\in{\mathcal S}(24),$$ respectively. 
The best ways of living for $SU(3)/T^2,\,Sp(3)/Sp(1)^3$ and $\,F_4/\mbox{Spin}(8)$ in $\mathbb E^8,\,\mathbb E^{14}$ and $\mathbb E^{26}$ are given respectively by the corresponding minimal isoparametric hypersurfaces in $S^7,S^{13}$ and $S^{25}$.

 Not every Riemannian manifold is lucky enough to admit a best way of living in a Euclidean space. 
     For example,  although the unit $n$-sphere $S^{n}(1)$ admits  a best way of living in $\mathbb E^{n+1}$; the real projective $n$-space $RP^{n}(1), n>1,$ never admits a best way of living in any Euclidean space no matter how hard he/she tries (see Corollary \ref{C:5.2}). 
     
     Consequently, I proposed in  \cite{c7} the following two problems: 
  \vskip.05in

    {\bf Existence Problem}: {\it What  are the necessary and sufficient conditions  for 

\hskip.2in a  Riemannian manifold  to admit  a best way of living in a 
  best world?}
   
\vskip.1in

 {\bf Classification Problem}: {\it If a Riemannian manifold admits best ways of
   
\hskip.2in    living in a best world, what are his/her best ways of living?}

\subsection{Two solutions to  Existence Problem}

 By applying  $\lambda_{1}\geq n\hat \Delta_{0}$ given in Corollary \ref{C:5.1} and  a result from the theory of finite type submanifolds, we have the following solution for the Existence Problem.
   \vskip.05in

\begin{thm} \label{T:5.3} A compact irreducible homogeneous Riemannian $n$-manifold 
admits a best way of living in some Euclidean space if and only if it satisfies the intrinsic condition: $ \lambda_{1}=n\hat \Delta_{0}.$ \end{thm}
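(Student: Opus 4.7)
The plan is to prove the two implications separately: necessity via a Reilly-type upper bound on $\lambda_{1}$, and sufficiency via the first standard immersion built from $\lambda_{1}$-eigenfunctions together with Takahashi's theorem. In both directions the combination with Corollary \ref{C:5.1}, which already supplies the lower bound $\lambda_{1}\geq n\hat\Delta_{0}$, is the key pivot.

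For the necessity, suppose $\phi:M\to\mathbb E^{N}$ is a best way of living, so $\phi$ is an isometric embedding with $|H|^{2}\equiv\hat\Delta_{0}$. Because $M$ is homogeneous, the intrinsic quantity $\hat\Delta_{0}$ is constant on $M$, hence $|H|^{2}$ is constant. After translating $\phi$ so that the center of mass $\int_{M}\phi\,dV$ is the origin, each coordinate function $x^{i}$ of $\phi$ is $L^{2}$-orthogonal to the constants, so the variational characterization of $\lambda_{1}$ yields $\lambda_{1}\int_{M}(x^{i})^{2}dV\leq \int_{M}|\nabla x^{i}|^{2}dV$. Summing in $i$ and using the isometric-immersion identity $\sum_{i}|\nabla x^{i}|^{2}=n$, together with Beltrami's formula $\Delta\phi=-nH$ and the Cauchy--Schwarz inequality applied to $\int_{M}\phi\cdot H\,dV$, one obtains Reilly's inequality $\lambda_{1}\leq \frac{n}{\mathrm{vol}(M)}\int_{M}|H|^{2}dV=n\hat\Delta_{0}$. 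Coupled with Corollary \ref{C:5.1}, this forces $\lambda_{1}=n\hat\Delta_{0}$.

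For the sufficiency, assume $\lambda_{1}=n\hat\Delta_{0}$ and let $V_{1}\subset C^{\infty}(M)$ be the $\lambda_{1}$-eigenspace, which is finite-dimensional by compactness and carries an orthogonal representation of the isometry group by homogeneity. Picking an $L^{2}$-orthonormal basis $f_{1},\ldots,f_{m}$ of $V_{1}$ with the standard normalization, the first standard immersion $\phi=(f_{1},\ldots,f_{m}):M\to\mathbb E^{m}$ is isometric. By Takahashi's theorem, the eigenfunction relation $\Delta\phi=\lambda_{1}\phi$ implies that $\phi(M)$ lies on the sphere $S^{m-1}(r)$ of radius $r=\sqrt{n/\lambda_{1}}$ centered at the origin and is minimal there. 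Consequently its mean curvature vector in $\mathbb E^{m}$ satisfies $|H|^{2}=1/r^{2}=\lambda_{1}/n=\hat\Delta_{0}$, so $\phi$ is ideal. Finally, compactness together with irreducibility and the equivariance of $\phi$ force injectivity, so $\phi$ is an embedding of $M$ into $\mathbb E^{m}$, providing the desired best way of living.

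The main obstacle I foresee is the injectivity statement in the sufficiency direction: Takahashi's theorem only supplies an isometric minimal immersion, and promoting this to an embedding requires representation-theoretic input specific to compact irreducible homogeneous Riemannian manifolds, where one has to show that the $\lambda_{1}$-eigenfunctions separate points. The other subtle point, in the necessity direction, is the translation of $\phi$ needed for Reilly's inequality; this is harmless over a Euclidean ambient but has no direct analogue on a curved best world, which is presumably why the present statement restricts the ambient to Euclidean spaces.
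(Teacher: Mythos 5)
Your two-step scheme is exactly the argument the paper has in mind: the paper gives no detailed proof of Theorem \ref{T:5.3}, saying only that it follows from Corollary \ref{C:5.1} together with ``a result from the theory of finite type submanifolds'', and that quoted result is precisely the package you reconstruct — Reilly's inequality $\lambda_1\, v(M)\leq n\int_M H^2\,dV$ (which gives necessity once homogeneity makes $\hat\Delta_0$, hence $H^2$, constant, and Corollary \ref{C:5.1} supplies the reverse bound) and the first standard immersion built from the $\lambda_1$-eigenspace, which by Takahashi's theorem is minimal in a hypersphere of radius $\sqrt{n/\lambda_1}$ and so has $H^2=\lambda_1/n=\hat\Delta_0$, i.e.\ is ideal. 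So in approach you are aligned with the paper; the necessity half of your write-up is complete as it stands.

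Two points in the sufficiency half are looser than the theory being quoted. First, the isometry of the eigenmap does not come from homogeneity alone: the induced metric is only known to be invariant under the isometry group, and it is the irreducibility of the isotropy representation that forces it to be a constant multiple of the given metric, so that one rescaling of the eigenfunctions makes $\phi$ isometric while preserving $\Delta\phi=\lambda_1\phi$; you invoke irreducibility only for injectivity, but it is already needed here. Second, the injectivity itself is a genuine requirement of the statement, since Definition \ref{D:4.2} demands an embedding, and your sentence ``compactness together with irreducibility and the equivariance of $\phi$ force injectivity'' is an assertion, not an argument: one must show that the $\lambda_1$-eigenfunctions separate points, i.e.\ that the equivariant eigenmap does not factor through a nontrivial Riemannian quotient of $M$ by a finite group of isometries. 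The paper absorbs exactly this ingredient into the cited theory of standard immersions (embeddings) of compact irreducible homogeneous spaces, so your proposal reproduces the paper's proof with that one step quoted rather than proved; judged as a self-contained argument it is incomplete at precisely the point you yourself flag.
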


\begin{cor}\label{C:5.2}  $RP^n(1)$ admits no best ways of living in any Euclidean space.\end{cor}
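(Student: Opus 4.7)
The plan is to invoke Theorem \ref{T:5.3}, which, since $RP^{n}(1)$ is compact, irreducible, and homogeneous, reduces the claim to showing that the intrinsic equality $\lambda_{1}=n\hat\Delta_{0}$ fails for $M=RP^{n}(1)$.

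I would first pin down $\hat\Delta_{0}$. Because $RP^{n}(1)$ has constant sectional curvature $1$, its scalar curvature is $\tau=n(n-1)/2$, and substituting the empty tuple ($k=0$) into Definition \ref{D:3.1} yields $c(\emptyset)=n^{2}(n-1)/(2n)=n(n-1)/2$; hence $\Delta(\emptyset)=\tau/c(\emptyset)=1$, which gives $\hat\Delta_{0}\ge 1$. For the matching upper bound, note that $RP^{n}(1)$ is locally isometric to $S^{n}(1)$ and so has the same pointwise $\delta$-invariants, and therefore the same $\hat\Delta_{0}$. Applying inequality \eqref{3.6} to the standard totally umbilical embedding $S^{n}(1)\hookrightarrow\mathbb E^{n+1}$, for which $H^{2}\equiv 1$, yields $\hat\Delta_{0}\le 1$. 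Combining the two bounds gives $\hat\Delta_{0}(RP^{n}(1))=1$, so $n\hat\Delta_{0}=n$.

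Next I would compute $\lambda_{1}$ via the quotient $RP^{n}(1)=S^{n}(1)/\{\pm\mathrm{id}\}$. The Laplace eigenfunctions on $RP^{n}(1)$ are exactly the antipodally invariant eigenfunctions on $S^{n}(1)$, i.e., the spherical harmonics of even degree. Since the degree-$k$ spherical harmonics on $S^{n}(1)$ realize the eigenvalue $k(k+n-1)$, the smallest positive Laplace eigenvalue on $RP^{n}(1)$ arises at $k=2$ and equals $\lambda_{1}=2(n+1)$. For every $n\ge 2$ one then has $\lambda_{1}=2(n+1)>n=n\hat\Delta_{0}$, so the equality criterion of Theorem \ref{T:5.3} is violated and the corollary follows.

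The main obstacle is the upper bound $\hat\Delta_{0}\le 1$. One could try a bare-hands verification that $\Delta(n_{1},\ldots,n_{k})\le 1$ for every tuple in $\mathcal S(n)$ directly from the constant-curvature formulas for $\delta$ and $c$, which is elementary but notationally cumbersome; the shortcut via the sphere embedding and \eqref{3.6} is much cleaner. Once $\hat\Delta_{0}=1$ is in hand, both the spectral computation on $RP^{n}(1)$ and the final strict inequality $\lambda_{1}>n\hat\Delta_{0}$ are immediate.
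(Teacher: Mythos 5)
Your proposal is correct and follows essentially the same route as the paper: the paper's proof simply quotes $\lambda_{1}=2(n+1)$ and $\hat\Delta_{0}=1$ for $RP^{n}(1)$ and concludes $\lambda_{1}\ne n\hat\Delta_{0}$ via Theorem \ref{T:5.3}. You merely supply the derivations the paper omits (the lower bound $\hat\Delta_{0}\ge 1$ from the trivial tuple, the upper bound via the umbilical sphere and \eqref{3.6}, and the even spherical harmonics computation of $\lambda_{1}$), all of which are sound.
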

\begin{proof} For $RP^{n}(1)$ we have  $ \lambda_{1}=2(n+1)$ and $\hat\Delta_{0}=1$. Thus $\lambda_{1}\ne n \hat \Delta_{0}$.
\end{proof}

Another simple solution of the Existence Problem is the following.

\begin{thm}\label{T:5.4}   If a compact Riemannian $n$-manifold $M$ satisfies
\begin{align} \lambda_1> \frac{n}{v(M)}\int_M\hat\Delta_0*1,\;\;\; v(M)=\text{\rm volume of $M$},\end{align}
then it never admits a best way of living in any Euclidean space.
\end{thm}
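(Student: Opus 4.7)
The plan is a proof by contradiction. Suppose, contrary to the conclusion, that $M$ admits a best way of living, that is, an ideal isometric embedding $\phi\colon M\to \mathbb E^m$ into some Euclidean space. By Definition \ref{D:4.1}, ``ideal'' means that $H^2 = \hat\Delta_0$ holds identically on $M$. The strategy is to produce the reverse spectral estimate
\[
\lambda_1 \leq \frac{n}{v(M)}\int_M H^2 *1,
\]
which, after substitution of $H^2=\hat\Delta_0$, directly contradicts the assumed strict inequality.

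To establish this auxiliary bound I would invoke the classical Reilly/Chen-type inequality for the first nonzero eigenvalue of the Laplacian on a closed submanifold of a Euclidean space. The argument starts from Beltrami's formula $\Delta\phi = -n\mathbf H$ applied componentwise to the position vector. After translating the immersion so that every coordinate function $\phi_i$ has zero mean on $M$ (translation affects neither $H$ nor $\lambda_1$), the min-max characterization of $\lambda_1$ provides the two estimates
\[
\lambda_1\int_M \phi_i^{\,2}\, *1 \leq \int_M |\nabla \phi_i|^2 *1, \qquad \lambda_1\int_M |\nabla\phi_i|^2 *1 \leq \int_M (\Delta\phi_i)^2 *1.
\]
Writing $\int_M|\nabla\phi_i|^2 *1 = \int_M \phi_i\,\Delta\phi_i\, *1$, applying Cauchy--Schwarz, and using the first of these estimates to dispose of the factor $\int_M \phi_i^{\,2}\,*1$, one obtains $\int_M|\nabla\phi_i|^2 *1 \leq \lambda_1^{-1}\int_M(\Delta\phi_i)^2 *1$. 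Summing over $i=1,\dots,m$ and using the pointwise identities $\sum_i|\nabla\phi_i|^2 = n$ (because $\phi$ is an isometric immersion) and $\sum_i(\Delta\phi_i)^2 = |\Delta\phi|^2 = n^2H^2$ yields the desired inequality.

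Plugging the ideal condition $H^2=\hat\Delta_0$ into the right-hand side gives $\lambda_1 \leq \frac{n}{v(M)}\int_M \hat\Delta_0\, *1$, contradicting the hypothesis. Hence $M$ admits no ideal isometric embedding, and a fortiori no best way of living, in any Euclidean space. The main point on which the argument pivots is the Reilly/Chen spectral bound, which is already the workhorse behind Theorem \ref{T:5.2} and Corollary \ref{C:5.1}; the only genuinely new observation is that for an ideal immersion the extrinsic quantity $H^2$ appearing on the right-hand side of that inequality coincides with the intrinsic quantity $\hat\Delta_0$ featuring in the hypothesis, so no additional technical obstacle arises.
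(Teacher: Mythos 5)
Your argument is correct and is precisely the expected route: Reilly's inequality $\lambda_1\leq \frac{n}{v(M)}\int_M H^2\,*1$ (derived, as you do, from Beltrami's formula $\Delta\phi=-n\mathbf H$, a translation making the coordinate functions mean-zero, and the min--max characterization of $\lambda_1$), combined with the ideality condition $H^2=\hat\Delta_0$ from Definition \ref{D:4.1}, yields an immediate contradiction with the hypothesis. The survey states Theorem \ref{T:5.4} without proof, but your argument coincides with the standard one behind it, so there is nothing to correct beyond noting that your second displayed estimate is not itself given by min--max but is exactly what your subsequent Cauchy--Schwarz step derives.
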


\subsection{Applications to rigidity theory: Uniqueness problem}
 If a Riemannian manifold admits a best way of living in a best world and if it is unique,  it gives a rigidity theorem. 
 Otherwise, it admits multiple ways of best  living (lucky one\,!).
 
  For instance, by applying $\delta$-invariants we proved the following rigidity results.
 
 \begin{thm}\label{T:5.5} If $M$ is an open portion of $S^n(1)$, then for every isometric immersion of $M$ in a Euclidean $m$-space with arbitrary codimension we have
\begin{align}\label{5.4} H^2\geq 1.\end{align}
The equality case of  \eqref{5.4} holds identically if and only if $M$ is embedded as an open portion of an ordinary hypersphere  in a totally geodesic $\mathbb E^{n+1}\subset \mathbb E^m$.\end{thm}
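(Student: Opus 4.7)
The plan is to apply Theorem~\ref{T:3.2} with the trivial $k$-tuple (the empty tuple, $k=0$). From the definitions in Section~3 one has $\delta(\emptyset)=\tau$, and evaluating the normalizing constant in \eqref{3.3} at $k=0$ gives
\[ c(\emptyset)=\frac{n^{2}(n-1)}{2n}=\frac{n(n-1)}{2}. \]
Since $M$ is an open portion of $S^{n}(1)$, every sectional curvature equals $1$, so $\tau=\binom{n}{2}=n(n-1)/2$ at every point, and hence $\Delta(\emptyset)=\delta(\emptyset)/c(\emptyset)=1$. Substituting this into \eqref{Ideal2} immediately yields the asserted inequality $H^{2}\ge 1$.

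For the equality case, I use the shape-operator characterization in Theorem~\ref{T:3.2}. When $k=0$ the block structure in \eqref{3,5} degenerates: no submatrices $A^{r}_{j}$ appear, and the prescription reduces to $A_{e_{r}}=\mu_{r}I$ for every $r=n+1,\ldots,m$. Since every $A_{e_r}$ is a scalar multiple of the identity for this particular orthonormal basis, the same holds for every normal direction. Thus $M$ is totally umbilical in $\mathbb E^{m}$.

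Finally, I invoke the classical classification of totally umbilical submanifolds of Euclidean space: any totally umbilical $n$-dimensional submanifold ($n\ge 2$) of $\mathbb E^{m}$ lies in an $(n+1)$-dimensional affine subspace and is there a piece of either a hyperplane or a round $n$-sphere. Since $M$ has constant sectional curvature $1>0$, only the spherical possibility survives, and the value $K\equiv 1$ forces the radius to be $1$. Identifying the ambient affine $(n+1)$-subspace with a totally geodesic $\mathbb E^{n+1}\subset\mathbb E^{m}$ gives exactly the embedding asserted in the theorem. No serious obstacle arises: the whole argument rests on the single observation that the trivial $\delta$-invariant on $S^{n}(1)$ normalizes to~$1$, after which Theorem~\ref{T:3.2} and the standard classification of umbilical submanifolds do all the remaining work.
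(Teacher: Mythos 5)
Your proposal is correct and follows exactly the route the paper's machinery is built for: the statement is the $k=0$ (trivial $\delta$-invariant) case of Theorem~\ref{T:3.2}, i.e.\ the inequality $H^{2}\geq\rho$ with $\rho=1$ on open portions of $S^{n}(1)$, and the equality case reduces via the shape-operator characterization to total umbilicity and the classical classification of totally umbilical submanifolds of $\mathbb E^{m}$. The survey itself states Theorem~\ref{T:5.5} without proof, but this is precisely the intended argument; the only (trivial) point left implicit in your write-up is the converse verification that the standard unit hypersphere in a totally geodesic $\mathbb E^{n+1}$ does satisfy $H^{2}=1$ identically.
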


  \begin{thm} \label{T:5.6}  Let $M$ be an open part of  ${\mathbb E}^{n_1-1}\times S^{n-n_1+1}(1)$. Then for every isometric immersion of $M$  in $\mathbb E^{m}$ with arbitrary codimension we have 
\begin{align}\label{5.5} H^2\geq \left(\text{\small$ \frac {n-n_1+1}{n}$}\right)^{\! 2}\!.\end{align}

The equality case of \eqref{5.5} holds if and only if  $M$ is immersed as an open part of  a spherical hypercylinder ${\mathbb E}^{n_1-1}\times S^{n-n_1+1}(1) \subset \mathbb E^{n_1-1}\times \mathbb E^{n-n_1+2}\subset \mathbb E^m$. \end{thm}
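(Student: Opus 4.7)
The plan is to apply Theorem~\ref{T:3.2} with $k=1$ to the single-entry tuple $(n_1) \in \mathcal{S}(n)$, which reduces the inequality \eqref{5.5} to evaluating the normalized invariant $\Delta(n_1)$ explicitly for the product manifold $M = \mathbb{E}^{n_1-1} \times S^{n-n_1+1}(1)$.

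First I would decompose $T_pM = V_E \oplus V_S$ with $\dim V_E = n_1-1$ and $\dim V_S = n-n_1+1$, and exploit the product structure: any plane meeting $V_E$ nontrivially has zero sectional curvature, while planes contained in $V_S$ have $K \geq 0$, with equality precisely when the plane is one-dimensional in $V_S$. Hence $\tau(p) = \binom{n-n_1+1}{2}$. To evaluate $\delta(n_1)(p) = \tau(p) - \inf_L \tau(L)$ over $n_1$-dimensional subspaces $L$, I would show that $\tau(L) = 0$ forces $\dim \pi_{V_S}(L) \leq 1$, since otherwise any plane in $L$ with two-dimensional sphere projection contributes a strictly positive term while all remaining terms are nonnegative. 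Combined with $\dim L = n_1$ and $\dim V_E = n_1-1$, this inclusion forces $V_E \subseteq L$, so the infimum $\tau(L) = 0$ is realized exactly on subspaces of the form $L = V_E \oplus \langle v \rangle$ with $v \in V_S$ a unit vector. Therefore $\delta(n_1)(p) = (n-n_1+1)(n-n_1)/2$; substituting $c(n_1) = n^2(n-n_1)/[2(n-n_1+1)]$ into $\Delta(n_1) = \delta(n_1)/c(n_1)$ yields $\Delta(n_1) = ((n-n_1+1)/n)^2$, and \eqref{5.5} follows immediately from Theorem~\ref{T:3.2}.

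For the equality statement the ``if'' direction is a direct shape-operator computation on the standard hypercylinder $\mathbb{E}^{n_1-1} \times S^{n-n_1+1}(1) \subset \mathbb{E}^{n_1-1} \times \mathbb{E}^{n-n_1+2}$: the unique unit normal along the radial direction of the sphere factor gives a shape operator with eigenvalue $0$ of multiplicity $n_1-1$ on $V_E$ and eigenvalue $1$ of multiplicity $n-n_1+1$ on $V_S$, producing $H^2 = ((n-n_1+1)/n)^2$ and fitting the block form \eqref{3,5} with $A_1^r = \mathrm{diag}(0,\ldots,0,1)$ and $\mu_r = 1$. The main obstacle is the converse: assuming the equality holds at every point, Theorem~\ref{T:3.2} yields only a pointwise decomposition of the shape operators, with an $n_1$-dimensional block aligned to a subspace $L_1 \supseteq V_E$ (using the characterization from the first step) and an $(n-n_1)$-dimensional umbilic block on the orthogonal complement, which lies in $V_S$. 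To promote this to the global hypercylinder statement I would combine the Codazzi equation for the common umbilic eigendistribution to establish its integrability and the parallelism of its normal direction, then invoke J.\,D.~Moore's splitting lemma for reducible second fundamental forms to decompose the immersion as a Riemannian product, and finally apply a reduction-of-codimension argument to recognize the sphere factor as a totally umbilical embedding into a complementary $(n-n_1+2)$-dimensional affine subspace of $\mathbb{E}^m$.
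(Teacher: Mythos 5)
Your first half is correct and is exactly the intended route: the paper states Theorem \ref{T:5.6} without proof, but it is an application of Theorem \ref{T:3.2} to the single tuple $(n_1)\in\mathcal S(n)$, and your evaluation is right — for the product metric one has $\tau=\tfrac12(n-n_1)(n-n_1+1)$, the curvature is nonnegative, $\tau(L)=0$ forces $L=V_E\oplus\langle v\rangle$ with $v\in V_S$, hence $\delta(n_1)=\tau$ and $\Delta(n_1)=\big((n-n_1+1)/n\big)^2$, which gives \eqref{5.5}. The ``if'' part of the equality statement is indeed a routine shape-operator computation. (For the converse you implicitly use the refined form of the equality case in which the blocks are adapted to a subspace attaining the infimum; that is how the theorem is established in the source papers, so this is acceptable.)

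The gap is in the converse, at precisely the point where you invoke Moore's splitting lemma. Moore's lemma requires $h(X,Y)=0$ for every $X\in V_E$ and $Y\in V_S$, but the block decomposition in Theorem \ref{T:3.2} only gives $h(V_E,\mathcal D)=0$ for the $(n-n_1)$-dimensional umbilic complement $\mathcal D$ of $L_1=V_E\oplus\langle e_{n_1}\rangle$; the spherical direction $e_{n_1}$ lies \emph{inside} the first block, so $h(e_i,e_{n_1})$ (and $h|_{V_E\times V_E}$) are not controlled by the block form, and Codazzi/integrability of $\mathcal D$ does not produce this vanishing. The missing step is a pointwise Gauss-equation argument: with $\zeta:=h(e_u,e_u)$ for $u>n_1$, flatness of the mixed planes $(e_i,e_u)$ gives $\langle h(e_i,e_i),\zeta\rangle=0$; the trace condition $\operatorname{trace}A^r_1=\mu_r$ gives $\sum_{i<n_1}h(e_i,e_i)+h(e_{n_1},e_{n_1})=\zeta$; then flatness of the planes $(e_i,e_{n_1})$ yields $\sum_{i<n_1}|h(e_i,e_{n_1})|^2=-\big|\sum_{i<n_1}h(e_i,e_i)\big|^2$, forcing $h(V_E,e_{n_1})=0$ and $\sum_{i<n_1}h(e_i,e_i)=0$. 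Only after this do you have $h(V_E,V_S)=0$ (so Moore applies) and minimality of the flat factor; then flat plus minimal gives the Euclidean factor totally geodesic, and since $n^2H^2=(n_1-1)^2|H_1|^2+(n-n_1+1)^2|H_2|^2$ with $|H_2|^2\geq 1$ by Theorem \ref{T:5.5}, equality pins down the spherical factor as a round hypersphere in a totally geodesic $\mathbb E^{n-n_1+2}$. So your architecture is sound, but the crux connecting the pointwise equality data to the hypotheses of Moore's lemma is absent from your sketch.
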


  \begin{rem}  Rigidity results discovered in this way are quite different from usual rigidity results, e.g. rigidity in the sense of  Aleksandr D. Aleksandrov (1912--1999).  Because rigidity theorems via ideal immersions do not require any assumption on topology or codimension of the submanifolds.   
   On  the other hand, (almost) all other rigidity results do require global conditions as well as assumptions of very small codimension (e.g. in the theory of convex surfaces \`a la Aleksandrov). \end{rem}

\subsection{Applications to warped products}
 An application of $\delta$-invariants to warped products is to obtain the following sharp result. 

\begin{thm} \label{T:5.7} For every isometric immersion $\phi:N_1\times_f N_2\to \tilde M$ of a 
warped product $N_1\times_f N_2$ into any Riemannian manifold $\tilde M$,  the warping function satisfies
\begin{align}\label{14.37}&  \frac{\Delta f}{f}\leq \frac{(n_1+n_2)^2}{4n_2} H^2+ n_{1} \max \tilde K,\;\; n_i=\dim N_i,\; i=1,2,\end{align} where $\Delta$ is the Laplacian on $N_1$.
\end{thm}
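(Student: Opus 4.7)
The plan is to derive Theorem \ref{T:5.7} directly from the general $\delta$-invariant inequality \eqref{3.1} by identifying the Laplacian of the warping function with the trace of mixed sectional curvatures on the warped product.

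At a point $p=(p_1,p_2)\in N_1\times_f N_2$, I would choose orthonormal bases $\{e_1,\ldots,e_{n_1}\}$ of $T_{p_1}N_1$ and $\{e_{n_1+1},\ldots,e_n\}$ of $T_{p_2}N_2$, with the second rescaled by $f(p_1)$ so that both are orthonormal in the warped-product metric. Taking $L_1=T_{p_1}N_1$ and $L_2=T_{p_2}N_2$ as mutually orthogonal subspaces of $T_pM$ with $n_1+n_2=n$, they exhaust $T_pM$, and we decompose
\begin{equation*}
\tau(p)=\tau(L_1)+\tau(L_2)+\sum_{i=1}^{n_1}\sum_{j=n_1+1}^{n}K(e_i\wedge e_j).
\end{equation*}
Since $(L_1,L_2)$ is admissible in the infimum defining $\delta(n_1,n_2)$ via \eqref{2.3}, this immediately gives $\delta(n_1,n_2)(p)\geq \sum_{i,j}K(e_i\wedge e_j)$, the sum running over mixed index pairs. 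The central computational ingredient is the O'Neill curvature formula for warped products, namely $K(X\wedge V)=-H^f(X,X)/f$ for unit $X\in T_{p_1}N_1$ and unit $V\in T_{p_2}N_2$, where $H^f$ is the Hessian of $f$ on $N_1$. Tracing over the $e_i$ yields $\sum_i K(e_i\wedge V)=\Delta f/f$, and then summing over the $n_2$ vectors $e_j$ produces $\sum_{i,j}K(e_i\wedge e_j)=n_2\,\Delta f/f$.

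Next I would apply Theorem \ref{T:3.1} to the pair $(n_1,n_2)\in\mathcal S(n)$. Substituting $k=2$ and $n_1+n_2=n$ into the coefficients on the right-hand side of \eqref{3.1} collapses them to
\begin{equation*}
\frac{n^2(n+k-1-n_1-n_2)}{2(n+k-n_1-n_2)}=\frac{n^2}{4},\qquad \tfrac{1}{2}\bigl(n(n-1)-n_1(n_1-1)-n_2(n_2-1)\bigr)=n_1n_2,
\end{equation*}
so $\delta(n_1,n_2)\leq \tfrac{n^2}{4}H^2+n_1n_2\max\tilde K$. Chaining this with the lower bound derived above and dividing by $n_2$ produces precisely \eqref{14.37}.

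The main obstacle is twofold. Computationally, the key nontrivial step is establishing the O'Neill identity $\sum_i K(e_i\wedge V)=\Delta f/f$; sign conventions (in particular the geometer's convention $\Delta f=-\mathrm{trace}(\mathrm{Hess}\,f)$) and the rescaling of the $N_2$-frame by $f$ must be tracked carefully. Structurally, membership $(n_1,n_2)\in\mathcal S(n)$ requires $n_1,n_2\geq 2$; in the degenerate cases $n_1=1$ or $n_2=1$ one runs exactly the same argument using $\delta(n-1)$ in place of $\delta(n_1,n_2)$, taking $L=T_{p_2}N_2$ (respectively $L=T_{p_1}N_1$) as the admissible subspace, and a brief check shows that the relevant coefficients in \eqref{3.1} again specialize to $n^2/4$ and $n_1n_2$, so no essentially new idea is needed.
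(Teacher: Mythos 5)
Your argument is correct, and it is worth noting that this survey states Theorem~\ref{T:5.7} without proof: the original proof (in Chen's warped-product papers) does not pass through the $\delta$-invariants at all, but is a direct computation with the Gauss equation combined with an algebraic optimization of the second fundamental form, which has the extra payoff of yielding the equality characterization (mixed totally geodesic second fundamental form, etc.). Your route instead observes that for the block decomposition $T_pM=T_{p_1}N_1\oplus T_{p_2}N_2$ one has $\tau-\tau(L_1)-\tau(L_2)=\sum_{i,j}K(e_i\wedge e_j)=n_2\,\Delta f/f$ by the warped-product (O'Neill) formula, that this specific pair is admissible in the infimum so $\delta(n_1,n_2)\geq n_2\,\Delta f/f$, and then feeds this into \eqref{3.1}, whose coefficients indeed collapse to $n^2/4$ and $n_1n_2$ when $k=2$ and $n_1+n_2=n$; dividing by $n_2$ gives exactly \eqref{14.37}. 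This is a legitimate and shorter derivation of the inequality (it simply cannot recover the equality statement, which Theorem~\ref{T:5.7} as quoted here does not assert anyway). Your sign bookkeeping is the right one: the convention $\Delta f=-\mathrm{trace}(\mathrm{Hess}\,f)$ is the one consistent with the rest of the paper (e.g.\ $\lambda_1>0$ in Theorem~\ref{T:5.2} and ``eigenvalue $\lambda>0$'' in Corollary~\ref{C:5.4}), and with it $\sum_i K(e_i\wedge V)=\Delta f/f$ holds as you claim. One small touch-up: in the degenerate case $n_1=n_2=1$ (so $n=2$) the invariant $\delta(n-1)=\delta(1)$ is not defined, since members of $\mathcal S(n)$ must have entries $\geq 2$; there you should instead use the trivial invariant $\delta(\emptyset)=\tau$ (the $k=0$ case of \eqref{3.1}), whose coefficients $n(n-1)/2=1$ and $\tfrac12 n(n-1)=1$ again give $\Delta f/f\leq H^2+\max\tilde K$, which is \eqref{14.37} for $n_1=n_2=1$. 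With that remark added, the proof is complete.
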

 
 This theorem has many immediate consequences. E.g., it yields the following.
 
 \begin{cor}\label{C:5.3} Let $N_1\times_f N_2$ be a warped product of Riemannian manifolds. If the warping function $f$ is harmonic, then we have:

\begin{itemize}
  \item[{\rm (1)}]    $N_1\times_f N_2$ never admits minimal immersion into any  Riemannian manifold of negative sectional curvature; 

\item[{\rm (2)}]   every minimal immersion of $N_1\times_f N_2$ into any Euclidean space is a warped product immersion regardless of codimension. \end{itemize}\end{cor}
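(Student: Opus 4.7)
The plan is to derive both parts of the corollary as immediate consequences of the warped product inequality \eqref{14.37} from Theorem \ref{T:5.7}, by simply substituting the hypotheses $\Delta f\equiv 0$, $H\equiv 0$, and the curvature assumption on $\tilde M$.

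For part (1), I would argue by contradiction. Suppose $\phi:N_1\times_f N_2\to \tilde M$ is a minimal immersion into a Riemannian manifold $\tilde M$ of negative sectional curvature. Minimality gives $H^2\equiv 0$, and the curvature assumption yields $\max\tilde K<0$ at every point (computed over tangent 2-planes of the submanifold). Plugging these into \eqref{14.37} and using $f>0$ (inherent in the definition of a warped product), the left-hand side becomes $0$ while the right-hand side is strictly negative, a contradiction. Note that the harmonicity of $f$ is precisely what makes the left-hand side of \eqref{14.37} vanish here.

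For part (2), let $\phi:N_1\times_f N_2\to \mathbb E^m$ be a minimal immersion, so $H^2\equiv 0$ and $\tilde K\equiv 0$. Then \eqref{14.37} simplifies to $\Delta f/f\le 0$. The harmonicity of $f$ forces equality to hold identically in \eqref{14.37}. One then invokes the equality characterization of Theorem \ref{T:5.7}, which asserts that equality in \eqref{14.37} at every point is equivalent to $\phi$ being a warped product immersion (i.e., a product of two isometric immersions respecting the warped decomposition). This concludes that $\phi$ has the desired warped product structure, regardless of the codimension $m-\dim(N_1\times N_2)$.

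The main obstacle is that the equality case of \eqref{14.37} was not spelled out in the statement of Theorem \ref{T:5.7} quoted above, so part (2) relies on that characterization being available (it is indeed the standard companion to the warped product inequality in the author's theory, and would have to be cited or verified from the proof of Theorem \ref{T:5.7}). Part (1) is essentially immediate once the signs are tracked correctly; the only subtlety there is confirming that $\max\tilde K$ in Theorem \ref{T:5.7} refers to the maximum of sectional curvatures of $\tilde M$ over 2-planes tangent to the submanifold, which is indeed strictly negative under the hypothesis that $\tilde M$ has negative sectional curvature.
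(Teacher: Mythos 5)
The paper offers no written proof of Corollary~\ref{C:5.3}; it is presented as an immediate consequence of Theorem~\ref{T:5.7}, which is exactly the route you take, so your overall strategy is the intended one. Part (1) is complete and correct: with $\Delta f\equiv 0$, $f>0$ and $H\equiv 0$, inequality \eqref{14.37} forces $0\leq n_1\max\tilde K<0$, a contradiction, and your reading of $\max\tilde K$ (maximum of the ambient sectional curvature over $2$-planes tangent to the submanifold) is the correct one.

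For part (2) there is a small but genuine imprecision in how you invoke the equality case. In Chen's work the equality case of the warped-product inequality (for immersions into a real space form, in particular $\mathbb E^m$) is \emph{not} stated as ``equality holds iff $\phi$ is a warped product immersion''; it says that equality holds identically if and only if $\phi$ is mixed totally geodesic (i.e.\ $h(X,Z)=0$ for $X$ tangent to $N_1$ and $Z$ tangent to $N_2$) with the partial traces of $h$ over the two factors equal. To pass from ``mixed totally geodesic'' to ``warped product immersion'' one needs a separate decomposition theorem, namely N\"olker's theorem (the warped-product analogue of Moore's lemma for Riemannian products). So the correct chain for (2) is: minimality and flatness of $\mathbb E^m$ plus harmonicity of $f$ force equality in \eqref{14.37} everywhere (as you argue); equality then gives mixed totally geodesicness; and N\"olker's theorem converts this into the warped product immersion conclusion, with no restriction on the codimension. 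You flagged the missing equality characterization yourself, but as stated your version conflates these two steps; citing the equality case in its precise form together with N\"olker's result closes the gap and makes the argument complete.
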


\begin{cor}\label{C:5.4} If $f\in C^{\infty}(N_{1})$ is an eigenfunction of $\Delta$ with eigenvalue $\lambda>0$, then every warped product $N_1\times_f N_2$ never admits a
minimal immersion into any Riemannian manifold with non-positive sectional curvature.
\end{cor}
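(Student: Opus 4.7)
The plan is to argue by contradiction using the sharp warped-product inequality of Theorem \ref{T:5.7}. Suppose, for contradiction, that the warped product $N_1 \times_f N_2$ admits a minimal isometric immersion $\phi \colon N_1 \times_f N_2 \to \tilde M$ into some Riemannian manifold $\tilde M$ of non-positive sectional curvature.

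First I would specialize inequality \eqref{14.37} to this hypothetical situation. The minimality of $\phi$ forces $H \equiv 0$, and the non-positivity of the ambient sectional curvature forces $\max \tilde K(p) \le 0$ at every point. Substituting these two facts into \eqref{14.37} reduces it to the pointwise inequality
\[
\frac{\Delta f}{f} \le 0
\]
on $N_1$.

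Next I would confront this with the eigenfunction hypothesis $\Delta f = \lambda f$ with $\lambda > 0$. Since $f$ is the warping function of a warped product, it is by convention strictly positive; hence $\Delta f / f = \lambda$, which is a positive constant on $N_1$. This contradicts the previous inequality, so no such immersion can exist.

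I do not expect any real obstacle here: the corollary is essentially an immediate consequence of plugging $H = 0$ and $\max \tilde K \le 0$ into Theorem \ref{T:5.7}. The only point one needs to be careful about is the positivity of $f$ (built into the definition of a warping function), since it is this positivity that allows one to read $\Delta f / f$ directly as the eigenvalue $\lambda$ with the correct sign.
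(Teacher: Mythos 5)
Your argument is correct and is precisely the derivation the paper intends, since Corollary \ref{C:5.4} is presented there as an immediate consequence of Theorem \ref{T:5.7}: substituting $H=0$ and $\max\tilde K\le 0$ into \eqref{14.37} gives $\Delta f/f\le 0$, which contradicts $\Delta f/f=\lambda>0$ because the warping function is positive. Your remark that the positivity of $f$ is what lets one identify $\Delta f/f$ with $\lambda$ is exactly the right point of care; nothing further is needed.
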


\begin{cor}\label{C:5.5} Let  $N_1$ be a compact Riemannian manifold. Then we have:
\begin{itemize}
\item[{\rm (1)}]  every warped product $N_1\times_f N_2$ never admits a minimal immersion into any Riemannian manifold of negative sectional curvature;
    
\item[{\rm (2)}]  every  warped product $N_1\times_f N_2$ never admits any minimal immersion into any Euclidean space.
\end{itemize}
\end{cor}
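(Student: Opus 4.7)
The plan is to apply Theorem~\ref{T:5.7} under the minimality assumption $H\equiv 0$ and to exploit the compactness of $N_1$, keeping in mind that the warping function $f$ is strictly positive on $N_1$.

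For part~(1), suppose $\phi\colon N_1\times_f N_2\to\tilde M$ is a minimal isometric immersion into a Riemannian manifold of negative sectional curvature. Specializing \eqref{14.37} to $H^2=0$ and $\max\tilde K<0$ yields
\[
\Delta f \,\leq\, n_1\, f\,\max\tilde K \,<\,0
\]
at every point of $N_1$. Since $N_1$ is compact, $f$ attains its maximum at some point $p_0\in N_1$, and the maximum principle forces $\Delta f(p_0)\geq 0$, contradicting $\Delta f<0$ everywhere.

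For part~(2), take $\tilde M=\mathbb E^m$, so $\tilde K\equiv 0$ and \eqref{14.37} collapses to $\Delta f\leq 0$ on $N_1$. Integrating over the compact, boundaryless manifold $N_1$ and invoking the divergence theorem gives $\int_{N_1}\Delta f\,dV=0$, forcing $\Delta f\equiv 0$. Hence $f$ is harmonic on a compact Riemannian manifold, and is therefore a positive constant $c$. The warped product then degenerates into the Riemannian product $N_1\times(N_2,c^2 g_{N_2})$. Applying Corollary~\ref{C:5.3}(2) with this (trivially harmonic) warping function, the minimal immersion must be a warped product immersion, which in the constant-warping case amounts to a genuine product immersion $\phi_1\times\phi_2\colon N_1\times N_2 \to \mathbb E^{m_1}\times\mathbb E^{m_2}=\mathbb E^m$. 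Minimality of a product immersion passes to each factor, so $\phi_1\colon N_1\to\mathbb E^{m_1}$ would be a minimal isometric immersion of the compact manifold $N_1$. This is impossible: each Euclidean coordinate function restricts to a harmonic function on the minimal submanifold $\phi_1(N_1)$ and so, by compactness of $N_1$, is constant, contradicting that $\phi_1$ is an immersion.

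The most delicate step is the closing move in part~(2): going from \emph{$f$ is a positive constant} to a contradiction requires an auxiliary splitting, supplied here by Corollary~\ref{C:5.3}(2) followed by the classical fact that Euclidean space admits no compact minimal submanifolds of positive dimension. Without such a splitting, constancy of $f$ alone gives only that the domain is a Riemannian product, which is not yet an obstruction; the substantive content of part~(2) is precisely that the obstruction descends to the compact factor $N_1$. Part~(1), by contrast, is essentially a one-line maximum principle argument once Theorem~\ref{T:5.7} is in hand.
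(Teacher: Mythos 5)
Your argument is correct and is essentially the derivation the paper intends (it states the corollary as an immediate consequence of Theorem~\ref{T:5.7}): part (1) is \eqref{14.37} with $H=0$ plus compactness of $N_1$, and part (2) is $\Delta f\le 0$, hence $f$ constant by integration, followed by the splitting supplied by Corollary~\ref{C:5.3}(2) and the classical fact that $\mathbb E^m$ contains no compact minimal submanifolds. Two small remarks: the step ``$\Delta f(p_0)\ge 0$ at a maximum'' presupposes Chen's sign convention $\Delta=-\operatorname{div}\operatorname{grad}$ (with the analyst's convention use the minimum point, or simply integrate as you do in part (2)), and the passage from a warped-product immersion with constant warping function to a genuine product immersion, which you assert rather than prove, is exactly the standard N\"olker/Moore splitting and does hold, so it is a justified appeal rather than a gap.
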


 \subsection{Applications to theory of submersions}

A Riemannian submersion $\pi: M\to B$ is called {\it trivial} if it is a direct product of a fiber and the base manifold $B$.
Two applications of $\delta$-invariants to the theory of submersions are the following.
   
  \begin{thm}\label{T:5.8}  If a Riemannian manifold $M$ admits a non-trivial Riemannian  submersion with totally geodesic fibers,  it cannot be isometrically  immersed  into any Riemannian manifold  of non-positive sectional curvature as a  minimal submanifold.
\end{thm}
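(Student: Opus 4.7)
The plan is to argue by contradiction, using Theorem~\ref{T:3.1} to pin down a $\delta$-invariant from above and O'Neill's theory of submersions to bound the same invariant from below. Suppose $\phi: M \to \tilde M$ is a minimal isometric immersion with $\max \tilde K \le 0$, and write $n_1 = \dim B$, $n_2 = \dim F$, so $n_1 + n_2 = n$. Non-triviality together with the totally geodesic fibers hypothesis is what will force the chosen invariant to be strictly positive somewhere.

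Assume first $n_1, n_2 \ge 2$, so $(n_1, n_2) \in \mathcal{S}(n)$. Substituting $H = 0$ and $\max \tilde K \le 0$ into \eqref{3.1}, together with the identity $n(n-1) - n_1(n_1-1) - n_2(n_2-1) = 2 n_1 n_2$ (valid because $n_1 + n_2 = n$), yields
$$\delta(n_1, n_2) \le n_1 n_2 \max \tilde K \le 0$$
at every point of $M$.

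For the opposite inequality, pick $p \in M$ where O'Neill's integrability tensor $A$ is nonzero, and take $L_1 = \mathcal{V}_p$ and $L_2 = \mathcal{H}_p$; these are mutually orthogonal of dimensions $n_2$ and $n_1$. Since the fibers are totally geodesic, the $T$-tensor of the submersion vanishes identically, and O'Neill's fundamental equations reduce to $K_M(V_\alpha, V_\beta) = K_F(V_\alpha, V_\beta)$ and $K_M(X_i, V_\alpha) = \|A_{X_i} V_\alpha\|^2$ for any orthonormal bases $\{V_\alpha\}$, $\{X_i\}$ of $\mathcal{V}_p$, $\mathcal{H}_p$. Summing the sectional curvatures of $M$ at $p$ along the orthogonal splitting $T_pM = \mathcal{H}_p \oplus \mathcal{V}_p$ gives
$$\tau(M)(p) - \tau(L_1) - \tau(L_2) = \sum_{i,\alpha} \|A_{X_i} V_\alpha\|^2 > 0,$$
so by the infimum definition \eqref{2.3}, $\delta(n_1, n_2)(p) > 0$, contradicting the upper bound.

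Two small points remain. If $n_2 = 1$ the pair $(n_1, n_2)$ is no longer in $\mathcal{S}(n)$; one repeats the argument with the single-subspace invariant $\delta(n-1)$ and $L = \mathcal{H}_p$, obtaining $\delta(n-1)(p) \ge \sum_i \|A_{X_i} V\|^2 > 0$ while Theorem~\ref{T:3.1} again forces $\delta(n-1) \le 0$; the case $n_1 = 1$ is vacuous because then $A \equiv 0$ automatically. The main obstacle I anticipate is verifying that non-triviality of the submersion is equivalent to $A \not\equiv 0$ under the totally geodesic fibers assumption; this follows from the standard fact that $A \equiv 0$ together with $T \equiv 0$ makes both the horizontal and the vertical distributions involutive and parallel, producing a local Riemannian product decomposition of $M$ incompatible with non-triviality.
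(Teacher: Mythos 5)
Your computational core is correct and, in essence, reproduces the mechanism behind the known proof. Note that this survey states Theorem~\ref{T:5.8} without proof; Chen's own argument in the literature (packaged via the ``submersion $\delta$-invariants'' alluded to in Remark~\ref{R:2.2}) establishes a sharp inequality of the form $\sum_{i,s}\|A_{X_i}V_s\|^2\le\frac{n^2}{4}H^2+\cdots$ directly, whereas you obtain the weaker but entirely sufficient qualitative statement by specializing Theorem~\ref{T:3.1}. Concretely: with $H=0$ and $\max\tilde K\le 0$, inequality \eqref{3.1} gives $\delta(n_1,n_2)\le n_1n_2\max\tilde K\le 0$ (your identity $n(n-1)-n_1(n_1-1)-n_2(n_2-1)=2n_1n_2$ is right), while O'Neill's equations with $T\equiv 0$ give $K(X,V)=\|A_XV\|^2$, so the admissible pair $(\mathcal{H}_p,\mathcal{V}_p)$ in the infimum \eqref{2.3} yields $\delta(n_1,n_2)(p)\ge\tau(p)-\tau(\mathcal{H}_p)-\tau(\mathcal{V}_p)=\sum_{i,\alpha}\|A_{X_i}V_\alpha\|^2>0$ wherever $A\ne 0$; the one-dimensional-fiber case via $\delta(n-1)=\max\mathrm{Ric}$ is handled the same way. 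All of this checks out.

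The step you flagged yourself is indeed the only genuine weak point, and your resolution of it is not quite right. With $T\equiv0$ and $A\equiv0$ one obtains only a \emph{local} Riemannian product splitting, and that is not incompatible with the paper's literal definition of non-trivial (``not a direct product of a fiber and $B$''): flat bundles and mapping tori --- e.g.\ the flat M\"obius band fibred over $S^1$, or $(\tilde B\times F)/\pi_1(B)$ with a nontrivial isometric holonomy action on $F$ --- are Riemannian submersions with totally geodesic fibers and $A\equiv0$ that are locally but not globally products; being themselves of non-positive curvature, they sit totally geodesically, hence minimally, inside manifolds of non-positive sectional curvature, so the global reading of the theorem actually fails on them. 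Thus your argument proves the statement with ``non-trivial'' understood as ``not even locally a Riemannian product'', equivalently $A\not\equiv0$ (non-integrable horizontal distribution), which is the intended content of the theorem; state that interpretation explicitly rather than asserting the equivalence with the global definition, since that equivalence is false and cannot be repaired. With that reading fixed, your treatment of the degenerate cases ($n_2=1$ via $\delta(n-1)$, and $n_1=1$ or $n=2$ being vacuous because $A\equiv0$ automatically) is fine.
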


\begin{thm}\label{T:5.9}  If a Riemannian manifold $M$ admits a Riemannian  submersion with totally geodesic fibers, then every minimal immersion of $M$ in a Euclidean is the direct product of a minimal immersion of the base manifold and a minimal immersion of a fiber in Euclidean spaces.
\end{thm}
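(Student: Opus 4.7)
The plan is to combine Theorem \ref{T:5.8} with the equality case of the $\delta$-inequality of Theorem \ref{T:3.2} and the classical decomposition theorem of J.~D.~Moore for isometric immersions of Riemannian products into Euclidean space. Write $\pi\colon M\to B$ for the given Riemannian submersion, $F$ for a typical fiber, and set $n_{1}=\dim B$, $n_{2}=\dim F$, so that $n_{1}+n_{2}=n$.

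First I would apply Theorem \ref{T:5.8}: since any Euclidean space has non-positive (indeed zero) sectional curvature, the hypothesis that $M$ admits a minimal immersion into some $\mathbb E^{m}$ rules out the non-trivial case, so the submersion $\pi$ is trivial and $M$ is isometric to the Riemannian direct product $B\times F$.

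Next I would apply Theorem \ref{T:3.2} to this product with the tuple $(n_{1},n_{2})\in\mathcal S(n)$. On one hand, $H^{2}\equiv 0$ together with \eqref{Ideal2} forces $\delta(n_{1},n_{2})\le 0$; on the other hand, choosing $L_{1}=T_{p}B$ and $L_{2}=T_{p}F$ in the definition \eqref{2.3} and using that the mixed sectional curvatures of a Riemannian product vanish gives $\tau(L_{1})+\tau(L_{2})=\tau(p)$, hence $\delta(n_{1},n_{2})\ge 0$. Thus $\delta(n_{1},n_{2})\equiv 0$ on $M$, inequality \eqref{Ideal2} is saturated identically, and the equality clause of Theorem \ref{T:3.2} produces, at each $p\in M$, an orthogonal minimizing splitting $T_{p}M=L_{1}(p)\oplus L_{2}(p)$ with $\dim L_{j}(p)=n_{j}$ and an adapted orthonormal frame in which every shape operator has the block form \eqref{3,5}. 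Equivalently, the second fundamental form satisfies $h(X,Y)=0$ for every $X\in L_{1}(p)$ and $Y\in L_{2}(p)$.

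Then I would identify the minimizing splitting with the product splitting: the pair $(T_{p}B,T_{p}F)$ is itself a minimizer by the previous paragraph, and in a generic Riemannian product it is the only minimizer up to a swap. With $h$ block-diagonal relative to $(T_{p}B,T_{p}F)$, Moore's decomposition theorem for isometric immersions of a Riemannian product into Euclidean space then provides an orthogonal splitting $\mathbb E^{m}=\mathbb E^{m'}\times\mathbb E^{m''}$ and isometric immersions $\phi_{B}\colon B\to\mathbb E^{m'}$, $\phi_{F}\colon F\to\mathbb E^{m''}$ whose product coincides with the given immersion. Minimality of $\phi_{B}$ and $\phi_{F}$ is then immediate, since the mean curvature vector of a product immersion is the orthogonal sum of those of the two factors.

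The main technical obstacle lies in the identification step just described: the equality clause of Theorem \ref{T:3.2} only produces \emph{some} minimizing splitting $(L_{1}(p),L_{2}(p))$, and when $B\times F$ has non-trivial curvature symmetries (for instance when $B$ and $F$ are isometric real space forms) there may be positive-dimensional families of minimizers, so extra work is needed to conclude that $h$ is adapted to the actual factor splitting. I would address this by exploiting the Gauss equation of $M\subset\mathbb E^{m}$ together with the explicit block form of the curvature tensor of $B\times F$ (in particular the vanishing of mixed curvatures) to propagate the block structure of $h$ from any single minimizer to the canonical pair $(T_{p}B,T_{p}F)$, and by using smoothness of the minimizing decomposition where it is available.
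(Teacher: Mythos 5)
Your skeleton is the right one, and it is essentially the standard $\delta$-invariant route to this result (the survey itself states Theorem \ref{T:5.9} without proof): Theorem \ref{T:5.8} trivializes the submersion, so $M=B\times F$ as a Riemannian product; minimality plus \eqref{Ideal2} forces $\delta(n_1,n_2)\le 0$, while the vanishing of mixed sectional curvatures of a product gives $\delta(n_1,n_2)\ge 0$; then mixed total geodesy of $h$ with respect to the factor distributions feeds Moore's decomposition theorem, and the mean curvature splits. The genuine gap is exactly the step you flag, and your proposed repair does not close it. The equality clause of Theorem \ref{T:3.2}, as stated, only provides \emph{some} pointwise splitting adapted to \eqref{3,5}; arguing that ``in a generic product the factor splitting is the only minimizer'' and then ``propagating'' the block structure via the Gauss equation is not a proof, and genericity fails precisely in the degenerate situations (for instance when both factors are flat at a point every orthogonal splitting of the required dimensions minimizes the sum in \eqref{2.3}), while ``smoothness of the minimizing decomposition'' is neither available nor established. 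The correct closure is simpler and needs no propagation: the proof of Theorems \ref{T:3.1}--\ref{T:3.2} establishes the inequality for each \emph{fixed} family of mutually orthogonal subspaces $L_1,\dots,L_k$, so when the $\delta$-equality holds at $p$ the block form is forced with respect to \emph{every} minimizing family; since $\delta(n_1,n_2)=0$ makes $(T_pB,T_pF)$ itself a minimizing pair, one gets $h(T_pB,T_pF)=0$ directly. (Equivalently, apply the fixed-subspace inequality to $L_1=T_pB$, $L_2=T_pF$ from the start.)

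Two further points. First, your use of $(n_1,n_2)\in\mathcal S(n)$ silently assumes $\dim B,\dim F\ge 2$; if the base or the fiber is one-dimensional the tuple is not admissible, and you must argue separately (easily: for a product with a one-dimensional factor spanned by $\partial_t$ one has $Ric(\partial_t,\partial_t)=0$, while minimality and the Gauss equation give $Ric(\partial_t,\partial_t)=-\sum_i|h(\partial_t,e_i)|^2$, so $h(\partial_t,\cdot)=0$ and Moore's theorem applies as before). Second, the whole middle portion of your argument can be replaced by quoting the survey itself: a direct product is a warped product with warping function $f\equiv 1$, which is harmonic, so Corollary \ref{C:5.3}(2) already asserts that every minimal immersion of $B\times F$ into a Euclidean space is a (here direct) product immersion; combined with Theorem \ref{T:5.8} and the additivity of the mean curvature vector of a product immersion, this yields Theorem \ref{T:5.9} immediately.
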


\subsection{Applications to affine geometry} $\delta$-invariants can also be applied to affine geometry. For instance, by applying $\delta$-invariants we have the following.

 \begin{thm}  \label{T:5.10} If the Calabi metric of an improper affine hypersphere in an affine space  is the Riemannian product metric of $k$ Riemannian manifolds,  then the improper affine hypersphere  is the Calabi composition of $k$ improper affine spheres.
 \end{thm}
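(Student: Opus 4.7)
The plan is to transport the $\delta$-invariant strategy from Theorem \ref{T:3.2} into the affine setting, using the affine $\delta$-invariants alluded to in Remark \ref{R:2.2}. The argument squeezes an ``affine Chen inequality'' from two sides to force equality, and then reads off the Calabi composition structure from the equality case.

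First, I would invoke the affine analogue of Theorem \ref{T:3.2}: for any nondegenerate affine hypersurface equipped with its Calabi metric, one has a sharp inequality
\[
\delta(n_{1},\ldots,n_{k}) \;\leq\; c(n_{1},\ldots,n_{k})\, L_{1}^{2},
\]
with $L_{1}$ the affine mean curvature (the normalized trace of the affine shape operator). For an improper affine hypersphere the affine shape operator vanishes identically, hence $L_{1}\equiv 0$, and the inequality collapses to $\delta(n_{1},\ldots,n_{k}) \leq 0$ at every point of $M$.

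Second, I would exploit the hypothesis that the Calabi metric splits as a Riemannian product $N_{1}\times\cdots\times N_{k}$ with $\dim N_{j}=n_{j}$ and $n_{1}+\cdots+n_{k}=n$. Choosing $L_{j}:=T_{p}N_{j}$ in the definition \eqref{2.3} and using that sectional curvatures of mixed $2$-planes vanish in a Riemannian product, one gets $\sum_{j}\tau(L_{j}) = \sum_{j}\tau(N_{j}) = \tau$, so that $\inf\{\tau(L_{1})+\cdots+\tau(L_{k})\}\leq \tau$ and therefore $\delta(n_{1},\ldots,n_{k})\geq 0$ pointwise. Combined with the previous step, this forces $\delta(n_{1},\ldots,n_{k})\equiv 0$ and equality throughout the affine inequality, with the infimum realized precisely by the decomposition coming from the product factors.

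Third, I would extract the structural consequences of equality using the affine counterpart of condition (a) in Theorem \ref{T:3.1}. This yields an orthonormal frame adapted to $T_{p}M = T_{p}N_{1}\oplus\cdots\oplus T_{p}N_{k}$ in which the affine difference (Fubini--Pick) tensor $K$ is block diagonal with the trace of each diagonal block balancing against the component in the affine normal direction. In particular, $K$ has no mixed components between distinct factors $T_{p}N_{i}$ and $T_{p}N_{j}$.

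Fourth, I would promote this pointwise algebraic picture to a global geometric statement. Because the factor distributions $TN_{j}$ are parallel for the Levi--Civita connection of the product Calabi metric, and $K$ both respects the splitting and satisfies its Codazzi equation, each factor $N_{j}$ integrates to an improper affine hypersphere in an affine subspace of the ambient space, and the way these factors reassemble to reconstruct $M$ is exactly Calabi's composition recipe. This last step is the main obstacle: one has to upgrade a purely infinitesimal block decomposition of $K$ to a global composition formula, which requires parallelism of the improper affine normal, the apolarity condition, and the Gauss--Codazzi relations in order to recognize each $N_{j}$ as a Calabi-type graph and to verify that its reconstruction coincides with $M$ on the nose.
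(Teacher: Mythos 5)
Your argument hinges on Step 1, and Step 1 is not available: with respect to the Calabi metric there is no inequality of the form $\delta(n_1,\ldots,n_k)\le c(n_1,\ldots,n_k)\,L_1^2$ with $L_1$ the affine mean curvature, and in fact the implication you need from it, ``improper affine hypersphere $\Rightarrow\delta\le 0$ for the Calabi metric'', is false. The point is a sign. For a graph immersion in the Calabi normalization the induced connection and the conormal (dual) connection are both flat, so the curvature of the Calabi metric $h$ is $\hat R(X,Y)Z=-[K_X,K_Y]Z$ with $K$ the difference tensor; hence $K(e_i\wedge e_j)=\|K(e_i,e_j)\|^2-h\bigl(K(e_i,e_i),K(e_j,e_j)\bigr)$ and $2\tau=\|K\|^2-n^2\|T\|^2$, which is the \emph{opposite} sign pattern to the Gauss identity $2\tau=n^2H^2-\|h\|^2$ that drives Theorem \ref{T:3.2}. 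Moreover the extrinsic quantity that plays the role of $H$ here is the Tchebychev field $T$ of the Calabi normalization (whose vanishing is exactly the improper condition $\det\mathrm{Hess}\,f=\mathrm{const}$), not the shape operator, which vanishes identically for \emph{every} graph in this normalization, so your inequality would assert $\delta\le 0$ for all Hessian metrics. Concretely: take the radial local solution of $\det\mathrm{Hess}\,f=1$ in dimension $2$ (with $f=g(r)$, $g'=\sqrt{r^2+c}$, $c>0$), whose Calabi metric has positive Gauss curvature, and form the Calabi composition of two copies. This is an improper affine hypersphere whose Calabi metric is a product of two positively curved surfaces, with $L_1=0$ and $T=0$, yet $\delta(2,2)=\tau>0$ (planes mixing the two factors have zero sectional curvature, so the infimum in \eqref{2.3} is $0$, not $\tau$). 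These counterexamples are precisely the objects the theorem concludes with, so any proof that squeezes $\delta$ to $0$ and asserts that the infimum is attained by the factor subspaces (your Steps 2--3) cannot be correct; your Step 2 lower bound $\delta\ge 0$ is fine, but the upper bound and the appeal to an equality-case normal form collapse.

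What the affine hypotheses actually give is more direct. Because the Calabi metric is a product, all mixed sectional curvatures vanish, so by the curvature formula above, for $\alpha\in\Delta_i$, $\beta\in\Delta_j$, $i\ne j$, one gets $\sum_{\alpha,\beta}\|K(e_\alpha,e_\beta)\|^2=h(P_i,P_j)$ with $P_i=\sum_{\alpha\in\Delta_i}K(e_\alpha,e_\alpha)$; the improper condition is apolarity $\sum_iP_i=nT=0$, whence $\sum_{i<j}h(P_i,P_j)=-\tfrac12\sum_i\|P_i\|^2\le 0$, forcing every mixed component of $K$ to vanish and each $P_i=0$. Only after this algebraic step does the part you rightly call ``the main obstacle'' begin: the factor distributions, being parallel for the Levi--Civita connection and invariant under $K$, are parallel for the flat induced connection, so the flat affine coordinates split, $f=f_1+\cdots+f_k$ with each $\det\mathrm{Hess}\,f_j$ constant, which is the Calabi composition. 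In your write-up both the substitute for the false inequality and this integration step are left as announcements, so the proposal as it stands does not prove the theorem. (Note the survey itself states Theorem \ref{T:5.10} without proof, referring to \cite{book} for the affine $\delta$-invariant theory, where the control quantity is the Tchebychev field, not the affine shape operator.)
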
 

\begin{thm} \label{T:5.11}   If the warping function $f$ of a warped product manifold  $N_1\times_f N_2$ satisfies $\Delta f < 0$  at some point on $N_1$, then $N_1\times_f N_2$ cannot be realized as an improper affine  hypersphere in an affine $(n+1)$-space ${\bf R}^{n+1}$. \end{thm}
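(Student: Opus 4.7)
The plan is to derive a contradiction directly from the affine Gauss equation specialized to improper affine hyperspheres. Suppose, for contradiction, that $N_1\times_f N_2$ can be realized as an improper affine hypersphere $M^n\subset\mathbf{R}^{n+1}$, $n=n_1+n_2$, with the induced Calabi (Blaschke) metric equal to the warped-product metric.

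The first ingredient is the standard warped-product identity
\[
\sum_{i=1}^{n_1}\sum_{\alpha=1}^{n_2}K(e_i\wedge\epsilon_\alpha)=-n_2\,\frac{\Delta f}{f},
\]
valid for any orthonormal frames $\{e_i\}\subset TN_1$, $\{\epsilon_\alpha\}\subset TN_2$ in the Calabi metric; this is a purely intrinsic formula, so the sign of $\Delta f$ on $N_1$ is controlled by the sign of the mixed scalar curvature on $M$.

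The second ingredient is the affine Gauss equation, which for an improper affine hypersphere ($S\equiv 0$) reduces to $R(X,Y)Z=K_XK_YZ-K_YK_XZ$, where $K_XY:=K(X,Y)$ is the difference tensor. Since $K$ is totally symmetric each $K_X$ is self-adjoint, and the equiaffine normalization forces the apolarity condition $\sum_I K_{E_I}E_I=0$ for a full orthonormal frame $\{E_I\}$. A short computation using these facts gives, for orthonormal $X\in TN_1$, $Y\in TN_2$,
\[
K(X\wedge Y)=g(K_YY,K_XX)-\|K(X,Y)\|^2,
\]
and summing over the adapted frame, combined with the apolarity relation $\sum_\alpha K_{\epsilon_\alpha}\epsilon_\alpha=-\sum_i K_{e_i}e_i$, yields
\[
\sum_{i,\alpha}K(e_i\wedge\epsilon_\alpha)=-\Bigl\|\textstyle\sum_i K_{e_i}e_i\Bigr\|^2-\sum_{i,\alpha}\|K(e_i,\epsilon_\alpha)\|^2\ \le\ 0.
\]
Combining this with the warped-product identity above gives $\Delta f\ge 0$ at every point of $N_1$, contradicting the hypothesis that $\Delta f(p)<0$ for some $p\in N_1$.

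The main obstacle is not the algebra, which is a one-line calculation once the correct tensors are assembled, but marshalling the affine differential-geometric setup (Blaschke metric, totally symmetric difference tensor $K$, affine shape operator $S$, and especially the apolarity condition) that makes the affine Gauss equation usable with respect to the warped-product frame. Once this setup is in place, the simultaneous use of $S\equiv 0$ (improper hypersphere) and apolarity (equiaffine) forces the mixed scalar curvature to be a sum of manifestly non-positive terms, and the contradiction with $\Delta f<0$ is immediate.
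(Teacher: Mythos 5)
Your strategy (the intrinsic formula for the mixed sectional curvatures of a warped product, the affine Gauss equation with $S\equiv 0$, and apolarity) is exactly the right route to this obstruction, but the pivotal sign is wrong and the central step fails. For $\nabla=\hat\nabla+K$ one has $R^{\nabla}(X,Y)Z=\hat R(X,Y)Z+(\hat\nabla_XK)(Y,Z)-(\hat\nabla_YK)(X,Z)+K_XK_YZ-K_YK_XZ$; since $R^{\nabla}(X,Y)Z=h(Y,Z)SX-h(X,Z)SY=0$ for an improper affine hypersphere and the $\hat\nabla K$-terms cancel (the difference tensor is a Codazzi tensor when $S$ is a multiple of the identity), the Gauss equation for the Calabi metric is $\hat R(X,Y)Z=K_YK_XZ-K_XK_YZ$, hence $K(X\wedge Y)=\Vert K(X,Y)\Vert^{2}-h(K_XX,K_YY)$ for orthonormal $X,Y$ --- the opposite of your formula (and note that both the commutator and this expression are unchanged under $K\mapsto -K$, so no choice of convention rescues the sign). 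With apolarity this gives $\sum_{i,\alpha}K(e_i\wedge\epsilon_\alpha)=\sum_{i,\alpha}\Vert K(e_i,\epsilon_\alpha)\Vert^{2}+\bigl\Vert\sum_iK_{e_i}e_i\bigr\Vert^{2}\ge 0$, not $\le 0$; equivalently the scalar curvature of the Calabi metric of an improper affine hypersphere equals $\tfrac12\Vert K\Vert^{2}\ge 0$ (the affine theorema egregium $\chi=H+J$ with $H=0$), whereas your sign would force it to be $\le 0$. A concrete check: $F=y^{2}/(2x)+x^{3}/6$ satisfies $\det(\mathrm{Hess}\,F)\equiv 1$, so its graph is an improper affine sphere, and the Gaussian curvature of its Calabi metric at $(1,0)$ is $+\tfrac12$, while your pointwise formula together with apolarity would make it nonpositive. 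Since sectional curvature is convention-free, this is a genuine error, not a bookkeeping issue.

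The reason your last line nevertheless reads like the theorem is a second, compensating slip: your warped-product identity takes $\Delta=\mathrm{div}\,\mathrm{grad}$ on $N_1$, whereas in this paper $\Delta$ is $-\mathrm{div}\,\mathrm{grad}$; this is forced by inequality \eqref{14.37} itself (test it on the catenoid, i.e.\ the minimal surface $\mathbb R\times_{\sqrt{1+t^{2}}}S^{1}$ in $\mathbb E^{3}$, where $f''>0$, so \eqref{14.37} with $H=0$, $\tilde K=0$ demands $\Delta f\le 0$). In the paper's convention your intrinsic identity should read $\sum_{i,\alpha}K(e_i\wedge\epsilon_\alpha)=n_2\,\Delta f/f$. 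Once both signs are set straight the argument does close, in the way you intended: realizability as an improper affine hypersphere forces $n_2\,\Delta f/f=\sum_{i,\alpha}\Vert K(e_i,\epsilon_\alpha)\Vert^{2}+\bigl\Vert\sum_iK_{e_i}e_i\bigr\Vert^{2}\ge 0$, hence $\Delta f\ge 0$ everywhere on $N_1$, contradicting $\Delta f(p)<0$. So the architecture of your proof is sound and the repair is a single sign in the Gauss equation (plus matching the paper's Laplacian convention), but as written your key inequality $\sum_{i,\alpha}K(e_i\wedge\epsilon_\alpha)\le 0$ is false, and the apparent agreement of your conclusion with the theorem comes from two sign errors cancelling verbally.
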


 \begin{thm} \label{T:5.12}  Every  warped product  $N_1\times_f N_2$ with harmonic  warping function cannot be realized as an elliptic  proper affine hypersphere in ${\bf R}^{n+1}$. \end{thm}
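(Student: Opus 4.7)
The plan is to parallel the proof of Theorem \ref{T:5.11} (the improper case), replacing the affine Gauss equation of an improper affine hypersphere by that of a proper one. For an elliptic proper affine hypersphere $M^{n}\subset {\bf R}^{n+1}$ with Calabi metric $g$, the affine shape operator is $S = H\,\mathrm{id}$ with affine mean curvature $H > 0$, so the curvature tensor of $g$ satisfies
\begin{equation*}
R(X,Y,Z,W) = H\{g(X,W)g(Y,Z) - g(X,Z)g(Y,W)\} + \langle K(X,W),K(Y,Z)\rangle - \langle K(X,Z),K(Y,W)\rangle,
\end{equation*}
where $K$ is the symmetric, apolar difference tensor of $(M,g)$.

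First I would derive an affine analogue of Theorem \ref{T:5.7} for proper affine hyperspheres. Mimicking Chen's warped product computation, pick an orthonormal frame of $T_{p}M$ adapted to the decomposition $T_{p}N_{1}\oplus T_{p}N_{2}$ and expand the scalar curvature together with $\delta(n_{1})$ via the warped product identity
\begin{equation*}
\tau(N_{1}\times_{f}N_{2}) = \tau(N_{1}) + \frac{\tau(N_{2})}{f^{2}} - \frac{n_{2}\,\Delta f}{f} + (\text{gradient terms}).
\end{equation*}
Substituting the Gauss equation above, the constant-curvature $H$-part produces a positive multiple of $H$ on the right-hand side, while the $K$-commutator piece is controlled by the same algebraic lemma Chen uses to prove Theorem \ref{T:5.7} (applied now to $K$ in place of the second fundamental form $h$), with the apolarity condition $\mathrm{trace}_{g}K(X,\cdot) = 0$ playing the role of the mean curvature vector. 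The desired outcome is an inequality of the form
\begin{equation*}
\frac{\Delta f}{f} \geq c(n_{1},n_{2})\,H,
\end{equation*}
with a strictly positive constant $c(n_{1},n_{2})$.

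Since $f$ is harmonic, $\Delta f \equiv 0$; combined with $H > 0$ in the elliptic proper case, the displayed inequality becomes $0 \geq c(n_{1},n_{2})H > 0$, a contradiction. Hence $N_{1}\times_{f}N_{2}$ admits no realization as an elliptic proper affine hypersphere in ${\bf R}^{n+1}$.

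The main obstacle will be verifying the correct sign in the displayed inequality. Unlike the Riemannian proof of Theorem \ref{T:5.7}, where the squared mean curvature $H^{2}$ is manifestly nonnegative, here the difference tensor $K$ enters through a commutator-type quadratic form whose sign is not evident. One must use the apolarity of $K$ together with the Cauchy--Schwarz step in Chen's original argument to control the cross terms arising from the warped product block splitting, and check that the $H$-contribution survives on the correct side of the inequality. Once that sign analysis is pinned down, the rest reduces to routine warped product arithmetic and the one-line reduction from $\Delta f = 0$ and $H > 0$ to a contradiction.
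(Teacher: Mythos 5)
Your overall strategy is the right one (and is essentially how this result is obtained in the cited sources): realize the warped product metric as the affine metric of the hypersphere, sum the mixed sectional curvatures coming from the warped product structure, use the affine Gauss equation with the difference tensor $K$ and apolarity in place of the second fundamental form and minimality, and then let $\Delta f\equiv 0$ collide with the sign of the affine mean curvature. However, there is a genuine gap, and it sits exactly at the point you defer as ``the main obstacle'': the sign of the $K$-terms. The Gauss equation you display is wrong. For the Levi-Civita connection of the affine (Calabi) metric of an affine hypersphere with $S=H\,\mathrm{id}$ one has
\begin{equation*}
\hat R(X,Y)Z \;=\; H\{g(Y,Z)X-g(X,Z)Y\}\;-\;[K_X,K_Y]Z ,
\end{equation*}
so that for orthonormal $X,Y$ the sectional curvature is $H+\|K(X,Y)\|^{2}-\langle K(X,X),K(Y,Y)\rangle$, i.e.\ the quadratic $K$-part enters with the sign \emph{opposite} to the Riemannian Gauss equation you copied. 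With the sign as you wrote it, the computation produces $\Delta f/f\leq n_1H$, which under harmonicity only gives $H\geq 0$ --- perfectly consistent with ellipticity, hence no contradiction. So as stated your argument does not close.

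With the correct sign the proof is in fact simpler than the proof of Theorem \ref{T:5.7}, and no Cauchy--Schwarz or algebraic lemma is needed. Take adapted orthonormal bases $e_1,\dots,e_{n_1}$ of $TN_1$ and $Z_1,\dots,Z_{n_2}$ of $TN_2$ at a point; the warped product structure gives $\sum_{j,t}K(e_j\wedge Z_t)=n_2\,\Delta f/f$ (in the sign convention of Theorem \ref{T:5.7}), while the Gauss equation above gives
\begin{equation*}
\sum_{j,t}K(e_j\wedge Z_t)\;=\;n_1n_2H+\sum_{j,t}\|K(e_j,Z_t)\|^{2}-\Big\langle \sum_j K(e_j,e_j),\sum_t K(Z_t,Z_t)\Big\rangle .
\end{equation*}
Here apolarity is used precisely once: $\mathrm{tr}_g K=0$ forces the two partial traces to be negatives of each other, so the last term equals $+\|\sum_j K(e_j,e_j)\|^{2}\geq 0$, and one obtains the clean inequality $\Delta f/f\geq n_1H$. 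Harmonicity then forces $H\leq 0$, contradicting $H>0$ for an elliptic proper affine hypersphere (and, for comparison, the same inequality with $H=0$ is exactly what underlies Theorem \ref{T:5.11}). So your plan is salvageable, but you must correct the Gauss equation and carry out this summation; the sign analysis is not a technicality to be ``pinned down later'' --- it is the whole content, since the reversed sign is what makes apolarity work in your favor and distinguishes the affine case from the Riemannian one.
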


\subsection{Links between submersions and affine hypersurfaces} By applying an affine $\delta$-invariant, we discovered the following links between theory of Riemannian submersions and affine differential geometry.

 \begin{thm} \label{T:5.13}  If  a Riemannian manifold $M$ can be realized as an elliptic proper centroaffine hypersphere centered at the origin  in some affine space, then every Riemannian submersion $\pi:M\to B$   with minimal fibers has non-totally geodesic horizontal distribution.\end{thm}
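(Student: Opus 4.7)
The plan is to argue by contradiction. Suppose some Riemannian submersion $\pi:M\to B$ of the given $M$ has minimal fibers and totally geodesic horizontal distribution $\mathcal H$. Write $n_1=\dim F$ and $n_2=\dim B$, so that $(n_1,n_2)\in\mathcal S(n)$. The strategy is to play an affine upper bound on $\delta(n_1,n_2)$, coming from the centroaffine hypersphere structure, off against a submersion-geometric lower bound; the equality case then forces a local product structure, which is incompatible with the affine hypothesis.

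First, on the submersion side, I would read off curvature consequences via O'Neill's formalism: totally geodesic $\mathcal H$ makes O'Neill's integrability tensor $A$ vanish, and minimal fibers translate into $\mathrm{tr}\,T=0$. Choosing $L_1=\mathcal V_p$ and $L_2=\mathcal H_p$ in the definition \eqref{2.3}, O'Neill's curvature identities with $A=0$ rewrite every mixed sectional curvature $K(e_i,e_\alpha)$, for $e_i$ vertical and $e_\alpha$ horizontal, as an explicit combination of $T$ and $\nabla T$. Hence
\[
\delta(n_1,n_2)(p)\ \geq\ \tau(p)-\tau(\mathcal V_p)-\tau(\mathcal H_p)
\]
is controlled by $T$ alone, and this lower estimate saturates only when the fibers themselves are totally geodesic.

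Next, on the affine side, I would invoke the affine analog of Theorem \ref{T:3.2} for elliptic proper centroaffine hyperspheres centered at the origin. The derivation starts from the centroaffine Gauss equation, which expresses the curvature $R$ of $M$ in terms of its centroaffine metric and the difference tensor $K$; the apolarity condition $\mathrm{tr}\,K_X=0$, valid because the Tchebychev form of a hypersphere centered at the origin vanishes, together with the positive-definiteness supplied by the elliptic assumption, produces a sharp upper bound $\delta(n_1,n_2)\leq \kappa(n,n_1,n_2)$. The equality case forces $K$ to take a block-diagonal form aligned with the decomposition $T_pM=L_1\oplus L_2$, in perfect analogy with \eqref{3,5}.

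Combining the two bounds pushes the inequality of Step 2 into its equality case, which in turn forces both the fibers and the horizontal leaves to be totally geodesic; hence $M$ is locally a Riemannian product $F\times B$. The main obstacle, and the step where I expect the delicate work to sit, is to rule out this product structure: I would appeal to the Calabi-composition classification of affine hyperspheres whose induced metric is a Riemannian product (compare Theorem \ref{T:5.10}), which shows that an elliptic proper centroaffine hypersphere centered at the origin admits no nontrivial such decomposition. This contradicts the hypothesis, so the horizontal distribution $\mathcal H$ cannot be totally geodesic.
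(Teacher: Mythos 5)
First, a caveat: this survey states Theorem \ref{T:5.13} without proof --- it only records the result as an application of an affine $\delta$-invariant, with details left to the cited literature --- so I am measuring your proposal against the standard argument rather than a proof printed here. Your basic instinct is right: play the centroaffine structure off against O'Neill's equations for the splitting $T_pM=\mathcal V_p\oplus\mathcal H_p$, and your submersion-side computation is sound --- with totally geodesic horizontal distribution ($A=0$) and minimal fibers one gets $\tau-\tau(\mathcal V_p)-\tau(\mathcal H_p)=-\sum_{i,\alpha}|T_{e_i}e_\alpha|^2\leq 0$. The trouble is the combination step, which is a non sequitur. You hold a lower bound $\delta(n_1,n_2)\geq\tau-\tau(\mathcal V_p)-\tau(\mathcal H_p)$, i.e.\ a lower bound by a non-positive quantity, and an affine upper bound $\delta(n_1,n_2)\leq\kappa>0$; these are perfectly compatible, nothing is ``pushed into the equality case,'' and so the forced local product structure --- and everything built on it --- does not follow. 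What the affine hypothesis actually supplies is an estimate in the opposite direction on the mixed curvatures themselves: for an elliptic proper centroaffine hypersphere centered at the origin the shape operator is the identity, the Tchebychev form vanishes (apolarity, $\mathrm{trace}\,K_X=0$ for the difference tensor $K$), and the Gauss equation of the centroaffine metric reads $\hat R(X,Y)Z=h(Y,Z)X-h(X,Z)Y-[K_X,K_Y]Z$ in the normalization where the unit sphere has curvature $+1$; summing mixed sectional curvatures and using apolarity gives
\begin{equation*}
\tau-\tau(\mathcal V_p)-\tau(\mathcal H_p)\;=\;n_1n_2+\Big|\sum_i K(e_i,e_i)\Big|^2+\sum_{i,\alpha}|K(e_i,e_\alpha)|^2\;\geq\;n_1n_2\;>\;0
\end{equation*}
for every orthogonal splitting. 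Against your O'Neill bound $\leq 0$ this is an immediate contradiction, with no $\delta$-invariant in the sense of \eqref{2.3}, no equality-case analysis, and no product-rigidity step; this same mechanism also explains Theorem \ref{T:5.14}, where the $n_1n_2$ term is absent (improper case) and one therefore needs the fibers to be non-totally geodesic.

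Two further places where your write-up would fail as stated. Passing through $\delta(n_1,n_2)$ requires $(n_1,n_2)\in\mathcal S(n)$, which silently excludes submersions with one-dimensional fibers or one-dimensional base (circle fibrations, for instance); the mixed-curvature argument above has no such restriction. And the concluding step --- ruling out a local product by appeal to ``the Calabi-composition classification (compare Theorem \ref{T:5.10})'' --- is misdirected: Theorem \ref{T:5.10} concerns improper affine hyperspheres with the Calabi metric, not elliptic proper centroaffine hyperspheres, and Calabi composition is in any case an existence statement (affine hyperspheres with product metrics do occur), not the non-existence statement you need. So even if the product structure had been forced, your final contradiction would not be established.
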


 \begin{thm} \label{T:5.14}  If a Riemannian manifold $M$ can be realized as an improper hypersphere in an affine space, then every Riemannian submersion $\pi:M\to B$   with non-totally geodesic minimal fibers has non-totally geodesic horizontal distribution. \end{thm}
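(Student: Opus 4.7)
The plan is to argue by contradiction. Assume that $M$ is realized as an improper affine hypersphere in some affine $(n{+}1)$-space and that it admits a Riemannian submersion $\pi\colon M\to B$ whose fibers are minimal but not totally geodesic, and whose horizontal distribution is totally geodesic. Pick a point $p\in M$ at which the second fundamental form of the fiber through $p$ does not vanish, and let $V_p,H_p\subset T_pM$ denote the vertical and horizontal subspaces, of dimensions $n_1$ and $n_2=n-n_1$.

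The first step is to reformulate the submersion hypotheses in terms of O'Neill's tensors $T$ and $A$ with respect to the Calabi metric: the totally geodesic horizontal distribution yields $A\equiv 0$; the minimality of the fibers yields $\operatorname{tr}T=0$; the failure of the fibers to be totally geodesic yields $T_p\neq 0$. Using O'Neill's curvature identities for a Riemannian submersion, I would then compute the scalar curvatures $\tau(V_p)$ and $\tau(H_p)$ and compare their sum with $\tau(p)$, extracting a strictly positive defect proportional to $\|T_p\|^2$. Consequently, the affine invariant $\delta(n_1,n_2)(p)$ (computed from the Calabi metric) is strictly positive.

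The second step is to invoke the affine $\delta$-invariant inequality for improper affine hyperspheres, i.e.\ the affine analogue of Theorem~\ref{T:3.1} alluded to in Remark~\ref{R:2.2} and already exploited for Theorems \ref{T:5.10}--\ref{T:5.12}. Because the affine shape operator of an improper affine hypersphere vanishes identically, this inequality specializes to $\delta(n_1,n_2)\leq 0$ pointwise, with equality characterizing Calabi compositions — and in particular forcing $T\equiv 0$ on the vertical distribution. The strict positivity obtained in the first step therefore contradicts this bound, proving the theorem.

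The main obstacle is the careful O'Neill-style computation on the Calabi metric, together with the verification that equality in the affine $\delta$-inequality is rigid enough to force $T=0$, so that the coexistence of nonminimal but minimal fibers with an improper affine hypersphere structure is ruled out. A subsidiary check is that the splitting $T_pM=V_p\oplus H_p$ is orthogonal with respect to the Calabi metric (which follows from the submersion being Riemannian for that metric), so that $(n_1,n_2)$ is admissible in Definition~\ref{D:3.1} and its affine counterpart.
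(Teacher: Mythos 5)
Since this is a survey, the paper states Theorem \ref{T:5.14} without proof, so your proposal must stand on its own — and as written it has a genuine gap: both of your pivotal inequalities point in the wrong direction. First, under your hypotheses (minimal fibers, so the fiber mean curvature vector $N=\sum_i T_{U_i}U_i$ vanishes, and totally geodesic horizontal distribution, so the O'Neill tensor $A$ vanishes), O'Neill's equations give for the mixed scalar curvature $\tau(p)-\tau(V_p)-\tau(H_p)=\|A\|^2-\|T\|^2=-\|T_p\|^2$, which is strictly \emph{negative} at a point where the fiber is not totally geodesic — not a positive defect proportional to $\|T_p\|^2$. Since by definition $\delta(n_1,n_2)(p)\ge \tau(p)-\tau(V_p)-\tau(H_p)$, all you obtain is a lower bound by a negative number, and your conclusion $\delta(n_1,n_2)(p)>0$ does not follow. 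Second, the affine inequality you invoke is reversed: writing $C$ for the difference tensor between the induced (flat) affine connection and the Levi-Civita connection of the Calabi metric, an improper affine hypersphere satisfies $\hat R(X,Y)Z=-[C_X,C_Y]Z$ together with the apolarity condition $\mathrm{trace}\,C_X=0$, i.e. a Gauss-type identity with the \emph{opposite} sign to the minimal-submanifold case; hence $\hat\tau=\tfrac12\|C\|^2\ge 0$, $Ric\ge 0$, and $\tau-\tau(L_1)-\tau(L_2)\ge 0$ for any orthogonal complementary subspaces, so the bound $\delta(n_1,n_2)\le 0$ you want is false in general for such hypersurfaces. The survey itself signals this reversal: Theorem \ref{T:5.7} forces $\Delta f\le 0$ for minimal immersions into Euclidean space, while Theorem \ref{T:5.11} forbids $\Delta f<0$ for improper affine hyperspheres. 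The two sign errors do not cancel, because you funnel the argument through the infimum-based invariant $\delta$, which can only absorb lower bounds.

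The skeleton is nevertheless the right one and can be repaired by dropping $\delta$ and comparing the mixed scalar curvature of the specific splitting $T_pM=V_p\oplus H_p$ directly: the submersion hypotheses give $\tau-\tau(V_p)-\tau(H_p)=-\|T_p\|^2<0$ at a point where the fibers fail to be totally geodesic, whereas the improper-affine-hypersphere structure gives $\tau-\tau(V_p)-\tau(H_p)=\big\|\sum_i C(U_i,U_i)\big\|^2+\sum_{i,a}\|C(U_i,X_a)\|^2\ge 0$, using $\sum_a C(X_a,X_a)=-\sum_i C(U_i,U_i)$ from apolarity — a contradiction. This corrected comparison also explains why the hypothesis of non-totally geodesic fibers is needed here but not in Theorem \ref{T:5.13}, where the elliptic centroaffine identity contributes an additional strictly positive constant term to the mixed curvature. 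Finally, the equality-case rigidity you invoke (``forcing $T\equiv 0$ via Calabi compositions'') is neither needed nor justified as stated.
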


 \subsection{\!Applications to symplectic geometry}
  
 An application of $\delta$-invariants to Lagrangian submanifolds is to provide a  sharp solution to Problem 2.

  \begin{thm}\label{T:5.15} Let $M$ be a compact Riemannian $n$-manifold with null first Betti number $b_{1}(M)$ or finite fundamental group $\pi_{1}(M)$. If there exists a $k$-tuple $(n_1,\ldots,n_k)\in {\mathcal S}(n)$   such that 
$ \delta{(n_1,\ldots,n_k)}>0,$
then $M$ never admits a Lagrangian isometric immersion into $\mbox{\bf C}^n$. \end{thm}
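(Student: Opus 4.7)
The plan is to argue by contradiction: suppose $\phi\colon M\to \hbox{\bf C}^n$ is a Lagrangian isometric immersion. I will combine the extrinsic inequality of Theorem~\ref{T:3.2}, which applies because $\hbox{\bf C}^n\cong \mathbb E^{2n}$ is flat Euclidean, with the classical closedness of the mean curvature $1$-form of a Lagrangian submanifold in a Ricci-flat K\"ahler manifold; the topological hypothesis then promotes closedness to exactness, and a maximum-principle argument on a globally defined primitive supplies the contradiction.

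First, because $\hbox{\bf C}^n$ is Euclidean, Theorem~\ref{T:3.2} gives
\[
H^{2}(p)\;\geq\;\Delta(n_{1},\ldots,n_{k})(p)\;=\;\frac{\delta(n_{1},\ldots,n_{k})(p)}{c(n_{1},\ldots,n_{k})}
\]
at every $p\in M$, and the hypothesis $\delta(n_{1},\ldots,n_{k})>0$ forces $H\neq 0$ at every point. Next, the Lagrangian identity $J(TM)=T^{\perp}M$ makes $JH$ a well-defined tangent vector field on $M$, and we introduce the associated $1$-form $\alpha_{H}$ by $\alpha_{H}(X)=\langle H,JX\rangle$. A standard computation, using the Codazzi equation together with the Ricci-flatness of $\hbox{\bf C}^n$, shows that $d\alpha_{H}=0$. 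The topological hypothesis implies $H^{1}(M;\mathbb R)=0$: if $b_{1}(M)=0$ this is immediate, while if $\pi_{1}(M)$ is finite then $H_{1}(M;\mathbb Z)$ is a finite abelian group, so the universal-coefficient theorem gives $H^{1}(M;\mathbb R)\cong\mathrm{Hom}(H_{1}(M;\mathbb Z),\mathbb R)=0$. Hence $\alpha_{H}$ is exact, say $\alpha_{H}=d\theta$ for some smooth $\theta\colon M\to\mathbb R$. Since $M$ is compact, $\theta$ attains its maximum at some $p_{0}\in M$; there $d\theta(p_{0})=0$, so $\alpha_{H}(p_{0})=0$, and because $J(T_{p_{0}}M)$ is precisely the normal space at $p_{0}$ this forces $H(p_{0})=0$, contradicting the inequality above.

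The delicate input of the plan is the closedness $d\alpha_{H}=0$; this is a classical result of Dazord stating that on a Lagrangian submanifold of a K\"ahler--Einstein manifold the exterior derivative of $\alpha_{H}$ is (up to sign) the pullback of the ambient Ricci form, which vanishes identically on the Ricci-flat space $\hbox{\bf C}^n$. Once this fact is taken as given, the rest of the plan uses only Theorem~\ref{T:3.2} and standard topological identifications of $H^{1}(M;\mathbb R)$, so no further analytic machinery is required.
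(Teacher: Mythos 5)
Your argument is correct and is essentially the paper's intended proof: the paper derives Theorem \ref{T:5.15} from the pointwise inequality of Theorem \ref{T:5.16} with $c=0$ (equivalently, your use of Theorem \ref{T:3.2} for $\mbox{\bf C}^n=\mathbb E^{2n}$), combined with exactly the ingredients you supply, namely Dazord's closedness of the mean curvature $1$-form on a Lagrangian submanifold of a Ricci-flat K\"ahler manifold, the vanishing of $H^{1}(M;\mathbb R)$ under the Betti-number or fundamental-group hypothesis, and the critical point of a primitive forcing $H=0$ somewhere, contradicting $\delta(n_1,\ldots,n_k)>0$ (read, as the Whitney-sphere remark confirms, as positivity at every point).
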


\begin{rem} The assumption  on $\delta{(n_1,\ldots,n_k)}$ in Theorem \ref{T:5.15} is sharp. This can be seen as follows:
 Consider  the {\it Whitney  sphere} $W^n$ defined by  the Whitney immersion $w:S^n\to {\bf C}^n$: 
 $$w(y_{0},\ldots,y_{n})=\text{$\frac{1+{\rm i} y_{0}}{1+y_{0}^{2}}$}(y_{1},\ldots,y_{n})  $$
 with $y_{0}^{2}+y_{1}^{2}+\cdots+y_{n}^{2}=1.$
  This immersion  is Lagrangian with a unique self-intersection point at $w(-1,0,\ldots,0)$ $=w(1,0,\ldots,0)$. 
For each $k$-tuple ${(n_1,\ldots,n_k)}$, we have $\delta{(n_1,\ldots,n_k)}\geq 0$  with respect to the induced metric via $w$. Moreover, we have $\delta{(n_1,\ldots,n_k)}=0$ only at the unique point of self-intersection.
Furthermore, the assumption of  $\#\pi_1(M)<\infty$ or  $b_1(M)=0$ is necessary for $n\geq 3$ (see \cite{book}).
\end{rem}

The proof of Theorem \ref{T:5.15} is based on the next result.
 
 \begin{thm}\label{T:5.16} Let $M$ be a Lagrangian submanifold of a complex
 space form $M^n(4c)$. 
Then for each $k$-tuple $(n_1,\ldots,n_k)\in \mathcal S(n)$ we have
\begin{equation}\begin{aligned}\label{L1}& \delta{(n_1,\ldots,n_k)} \leq  \frac{n^2\big(n+k-1-\sum_{j=1}^k n_j\big)}{2\big(n+k-\sum_{j=1}^k n_j \big)} H^2 \\& \hskip1.3in +\text{$\frac{1}{2}$}\text{\small$\Bigg\{$}{{n(n-1)}}-\sum_{j=1}^k {n_j(n_j-1)} \text{\small$\Bigg\}$} c\end{aligned}\end{equation}
 
 \end{thm}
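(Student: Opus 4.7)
The plan is to deduce Theorem~\ref{T:5.16} as an immediate specialization of Theorem~\ref{T:3.1}. The key observation, which I would establish first, is that although a complex space form $M^n(4c)$ is not itself a real space form, the restriction of its curvature tensor to tangent directions of a Lagrangian submanifold behaves exactly as the curvature of a real space form of constant sectional curvature $c$.

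To verify this, I would start from the standard curvature formula of a complex space form of constant holomorphic sectional curvature $4c$,
\[
\tilde R(X,Y)Z = c\bigl\{g(Y,Z)X - g(X,Z)Y + g(JY,Z)JX - g(JX,Z)JY + 2g(X,JY)JZ\bigr\},
\]
and invoke the Lagrangian condition $J(T_pM) = T^\perp_pM$. For any $X,Y,Z \in T_pM$ the vectors $JX$, $JY$, $JZ$ all lie in the normal space, so the three K\"ahler-type terms above vanish when paired with any further tangent vector. It follows that for every orthonormal tangent pair $\{X,Y\}$ in $T_pM$,
\[
\tilde K(X \wedge Y) = g(\tilde R(X,Y)Y, X) = c,
\]
and therefore $\max \tilde K(p) = c$ identically on $M$.

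Substituting this value into \eqref{3.1} reproduces \eqref{L1} verbatim, completing the reduction. I do not expect a serious obstacle; the only technical point to be careful about is that Theorem~\ref{T:3.1} is stated in terms of sectional curvatures of the ambient space restricted to 2-planes of $T_pM$ (not, say, the maximum holomorphic sectional curvature $4c$), which is exactly the quantity computed above. The sharper structural features of Lagrangian immersions -- notably the total symmetry of the cubic form $\langle h(\cdot,\cdot), J\,\cdot\rangle$ -- are not needed to establish inequality \eqref{L1}, but would come into play for a detailed analysis of the equality case.
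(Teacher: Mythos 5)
Your reduction is correct: for a Lagrangian submanifold every 2-plane section of $T_pM$ is totally real, so the curvature formula of $M^n(4c)$ gives $\tilde K\equiv c$ on such planes, and substituting $\max\tilde K=c$ into \eqref{3.1} of Theorem \ref{T:3.1} yields \eqref{L1} at once. This is essentially the derivation the paper intends (it presents Theorem \ref{T:5.16} as the complex-space-form specialization of the general inequality, with the sharper Lagrangian-specific analysis reserved for the improved inequalities \eqref{L2} and \eqref{L3}), so nothing is missing.
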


We have the following result from \cite{c00b} for  Lagrangian submanifolds satisfying the equality of  \eqref{L1};  extending a result of \cite{cdvv96} on $\delta(2)$-ideal Lagrangian submanifolds.
 
\begin{thm}\label{T:5.17} If a Lagrangian submanifold of a complex space form  satisfies 
  the equality case of \eqref{L1} at a point,   then it is  minimal at that point.
\end{thm}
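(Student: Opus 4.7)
The plan is to combine the algebraic structure forced by equality in \eqref{L1} with the well-known total symmetry of the second fundamental form for Lagrangian submanifolds of K\"ahler manifolds.

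First, in exact parallel with condition~(a) of Theorem~\ref{T:3.1}, equality in \eqref{L1} at $p$ produces an orthonormal basis $\{e_1,\ldots,e_n\}$ of $T_pM$, with associated normal basis $\{Je_1,\ldots,Je_n\}$, such that each shape operator $A_{Je_r}$ takes the block-diagonal form \eqref{3,5}: symmetric blocks $A^r_1,\ldots,A^r_k$ on the index sets $\Delta_1,\ldots,\Delta_k$, a scalar block $\mu_r I$ on $\Delta_{k+1}$, with $\mathrm{trace}(A^r_j)=\mu_r$ for every $j=1,\ldots,k$.

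The decisive extra ingredient is the Lagrangian identity
$$\langle h(X,Y),JZ\rangle = \langle h(X,Z),JY\rangle,$$
a direct consequence of $\tilde\nabla J=0$ together with $J(T_pM)=T_p^{\perp}M$. Combined with the symmetry of $h$ in its tangent arguments, this makes $h^c_{ab}:=\langle h(e_a,e_b),Je_c\rangle$ totally symmetric in the three indices $a,b,c$.

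I then combine these two facts to show $\mu_r=0$ for every $r$. Fix $r$ and let $i$ be the unique index with $r\in\Delta_i$. Since each $n_j\leq n-1$, one can always find an index $a$ lying in some block $\Delta_l$ with $l\neq i$ (take $l=k+1$ whenever $\Delta_{k+1}\neq\emptyset$; otherwise take some $l\leq k$ with $l\neq i$, which is available because the assumption $\Delta_{k+1}=\emptyset$ forces $k\geq 2$). For such $a$, the $(r,a)$-entry of $A_{Je_a}$ vanishes since $r$ and $a$ lie in different blocks; by total symmetry,
$$h^r_{aa} = h^a_{ra} = 0.$$
But the same quantity computed from the block form of $A_{Je_r}$ equals $\mu_r$ when $l=k+1$ and $(A^r_l)_{aa}$ when $l\leq k$. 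In the first case this gives $\mu_r=0$ immediately; in the second, varying $a$ over $\Delta_l$ kills every diagonal entry of $A^r_l$, whence $\mu_r=\mathrm{trace}(A^r_l)=0$.

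Once $\mu_r=0$ for every $r$, each shape operator is trace-free and the mean curvature vector $H(p)=\tfrac{1}{n}\sum_r\mathrm{trace}(A_{Je_r})\,Je_r$ vanishes at $p$. The trivial case $k=0$ reduces to $A_{Je_r}=\mu_r I$ for all $r$, i.e.\ $M$ totally umbilical at $p$; since totally umbilical Lagrangian submanifolds of a complex space form are classically totally geodesic, we again obtain $H(p)=0$. The main technical point is extracting the precise block-diagonal equality condition from the derivation of \eqref{L1}; once that is in place, the combinatorial argument driven by the Lagrangian symmetry is short and transparent.
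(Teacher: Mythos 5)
Your argument is correct and is essentially the proof the paper points to: Theorem \ref{T:5.17} is stated with a citation to \cite{c00b}, and the argument there is exactly this combination of the block-diagonal equality conditions of \eqref{L1} (which, by linearity of $\xi\mapsto A_\xi$, hold for every normal direction and hence for each $Je_r$) with the total symmetry of $\langle h(\cdot,\cdot),J\cdot\rangle$, forcing $\mu_r=0$ for all $r$ and hence $H(p)=0$, with the $k=0$ (totally umbilical) case handled separately just as you do. One parenthetical slip: when $r\in\Delta_{k+1}$ your recipe ``take $l=k+1$'' would give $l=i$; choose instead any $l\leq k$ (possible since $k\geq 1$), after which your second case yields $\mu_r=\mathrm{trace}\,(A^r_l)=0$ and nothing else in the argument changes.
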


\section{Two recent optimal inequalities involving $\delta$-invariants for Lagrangian submanifolds}

In view of Theorem \ref{T:5.17}, it is very natural to look for optimal inequalities for non-minimal Lagrangian submanifolds in complex space forms.

\subsection{Case I: $\sum_{i=1}^{k}n_{i}<n$}
Recently, joint with F. Dillen, J. Van der Veken and L. Vrancken, we improve inequality
 \eqref{L1} in \cite{CD11,cdvv} to the following inequality for $\sum_{i=1}^{k}n_{i}<n$ (see also \cite{CD11.2}).

\begin{thm} \label{T:6.1} If $M$ is a Lagrangian submanifold of a complex space form $M^n(4c)$, then for a $k$-tuple  $(n_1,\ldots,n_k)\in {\mathcal S}(n)$ with $\sum_{i=1}^{k}n_{i}<n$ we have 
\begin{equation}\begin{aligned} \label{L2} &\hskip-.0in \delta(n_1,\ldots,n_k) \leq \, \frac{n^2 \big\{ n- \sum_{i=1}^{k}n_{i} + 3k-1\! -6\, {\sum_{i=1}^k} (2+ n_{i})^{-1} \big\}}{2 \big\{n-\sum_{i=1}^{k}n_{i}+3k+ 2-6\,{\sum_{i=1}^k} (2+ n_{i})^{-1} \big\}} H^2\
\\&\hskip1.6in  +\frac{1}{2}\text{\small$\Bigg\{$}n(n-1)-\sum_{i=1}^k n_i(n_i-1)\text{\small$\Bigg\}$}c\,.\end{aligned}\end{equation}
The equality sign of \eqref{L2} holds at a point $p\in M$ if and only if there exists an orthonormal basis $\{e_1,\ldots,e_n\}$ at $p$ such that with respect to this basis the second fundamental form $h$ takes the following form:
\begin{equation}\begin{aligned} \label{15.10}&h(e_{\alpha_i},e_{\beta_i})=\sum_{\gamma_i} h^{\gamma_i}_{\alpha_i \beta_i} Je_{\gamma_i}+\text{\small$\frac{3\delta_{\alpha_i\beta_i} }{2+n_i}$}\lambda Je_{\mu+1},\;  \sum_{\alpha_i=1}^{n_i} h^{\gamma_i}_{\alpha_i\alpha_i}=0,
\\& h(e_{\alpha_i},e_{\alpha_j})=0,\; i\ne j, \;
h(e_{\alpha_i},e_{\mu+1})=\text{\small$ \frac{3\lambda}{2+n_i}$} J e_{\alpha_i},\;   h(e_{\alpha_i},e_u)=0,
\\& h(e_{\mu+1},e_{\mu+1})=3\lambda Je_{\mu+1},
\; h(e_{\mu+1},e_u)=\lambda Je_u ,
\; h(e_u,e_v)=\lambda \delta_{uv} Je_{\mu+1},\;\;\end{aligned}\end{equation} for $1\leq  i,j\leq k; \mu+2\leq u,v\leq n $ and $ \lambda=\frac{1}{3}h^{\mu+1}_{\mu+1 \mu+1}$, where $\mu=n_1+\cdots+n_k$.
\end{thm}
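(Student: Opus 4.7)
The plan is to reduce inequality \eqref{L2} to a pointwise algebraic inequality on the second fundamental form and then solve the resulting constrained quadratic optimisation, exploiting in an essential way the \emph{total symmetry of the cubic form} that is forced by the Lagrangian condition.

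First I would fix $p\in M$ and choose mutually orthogonal subspaces $L_1,\ldots,L_k\subset T_pM$ realising the infimum in the definition of $\delta(n_1,\ldots,n_k)$, together with an orthonormal basis $\{e_1,\ldots,e_n\}$ adapted to the decomposition $T_pM=L_1\oplus\cdots\oplus L_k\oplus L_{k+1}$, where $L_{k+1}$ is the orthogonal complement (nonempty thanks to $\sum n_i<n$). One further rotates inside $L_{k+1}$ so that $Je_{\mu+1}$ is parallel to the mean curvature vector, with $\mu=n_1+\cdots+n_k$. Using $\{Je_1,\ldots,Je_n\}$ as a normal frame, set $h^r_{ij}=\langle h(e_i,e_j),Je_r\rangle$; these coefficients are \emph{totally symmetric in the three indices $i,j,r$} because the submanifold is Lagrangian.

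Applying the Gauss equation of $M^n(4c)$ to compute $\tau$ and each $\tau(L_j)$ produces an identity of the form
\begin{equation*}
\delta(n_1,\ldots,n_k) \;=\; \tfrac{1}{2}\Bigl\{n(n-1)-\sum_{i=1}^k n_i(n_i-1)\Bigr\}c \;+\; \mathcal{Q}(h),
\end{equation*}
where $\mathcal{Q}(h)$ is a homogeneous quadratic in the coefficients $h^r_{ij}$. Since $n^2 H^2=\sum_r\bigl(\sum_i h^r_{ii}\bigr)^2$, the desired bound reduces to the purely algebraic assertion that $\mathcal{Q}(h)$ is dominated by an explicit multiple of $\sum_r\bigl(\sum_i h^r_{ii}\bigr)^2$, with the precise coefficient $n^2 N/(2D)$, $N=n-\sum n_i+3k-1-6\sum(2+n_i)^{-1}$, $D=N+3$, to be extracted from the optimisation.

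The core of the argument is this algebraic step. I would treat the $h^r_{ij}$ as variables subject only to the total symmetry $h^r_{ij}=h^i_{jr}=h^j_{ir}$ and, for fixed mean curvature components $\mu_r=\sum_i h^r_{ii}$, maximise $\mathcal{Q}(h)$ by Lagrange multipliers. The total symmetry couples the diagonal components $h^r_{\alpha_i\alpha_i}$ with the mixed components $h^{\mu+1}_{\alpha_i\alpha_i}=h^{\alpha_i}_{\alpha_i\,\mu+1}$ in a block-triangular way; solving the resulting linear system inside each block $L_i$ yields exactly the rational factor $6/(2+n_i)$ (an Oprea-type bound on a symmetric trace-free three-tensor on an $n_i$-dimensional space), which then assembles into the stated coefficient. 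This is the step that distinguishes the Lagrangian refinement \eqref{L2} from the weaker general inequality \eqref{L1}, in which the symmetry $h^r_{ij}=h^j_{ir}$ is not available.

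The main obstacle is carrying out the optimisation cleanly while keeping track of all symmetry constraints, and then reading off equality. Once the Lagrange conditions at the extremum are written down, they force every component of $h$ that is not listed in \eqref{15.10} to vanish, fix the relative magnitudes $h^{\mu+1}_{\alpha_i\alpha_i}=\tfrac{3\lambda}{2+n_i}$, $h^{\mu+1}_{uu}=\lambda$ for $u\ge\mu+2$, $h^{\mu+1}_{\mu+1\mu+1}=3\lambda$, and leave the single free parameter $\lambda=\tfrac{1}{3}h^{\mu+1}_{\mu+1\mu+1}$. This is precisely the block form \eqref{15.10}, completing the equality characterisation.
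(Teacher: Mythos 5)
Your overall strategy is the right one, and it is in fact the route taken in the sources \cite{CD11,cdvv} that this survey cites (the paper itself reproduces no proof of Theorem \ref{T:6.1}): use the Gauss equation of $M^n(4c)$ to write $\tau-\sum_j\tau(L_j)$ as the constant-curvature term plus a quadratic expression in the coefficients $h^r_{ij}=\langle h(e_i,e_j),Je_r\rangle$, and then exploit the total symmetry of $h^r_{ij}$ — the feature unavailable for general submanifolds — to beat the coefficient in \eqref{L1}. However, as written the proposal has a genuine gap: the entire analytic content of the theorem is the blockwise algebraic estimate, and you do not carry it out. Asserting that a Lagrange-multiplier analysis ``yields exactly the rational factor $6/(2+n_i)$'' and ``assembles into the stated coefficient'' $n^2N/(2D)$ is a statement of the answer, not a derivation; you would also need to show that the constrained maximum of $\mathcal{Q}(h)$ with the traces $\mu_r$ fixed is actually attained and that the critical point you find is the global maximum (otherwise the multiplier conditions prove nothing about the inequality). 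In the published proofs this step is a delicate chain of Cauchy--Schwarz-type estimates organised by which block each of the three symmetric indices lies in, and the equality characterisation \eqref{15.10} is read off from when every one of those estimates is an equality — including the converse check that a second fundamental form of the shape \eqref{15.10} really does achieve equality, which you do not address.

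A second, more local flaw: you cannot ``rotate inside $L_{k+1}$ so that $Je_{\mu+1}$ is parallel to the mean curvature vector.'' The vector $JH$ is tangent but has no reason to lie in $L_{k+1}$, so this normalisation is not available a priori; that $H$ is proportional to $Je_{\mu+1}$ (equivalently $\mu_r=0$ for $r\neq\mu+1$ in the adapted frame) is a \emph{conclusion} of the equality analysis in \eqref{15.10}, not a choice of frame one may impose at the outset. The inequality must be proved with all components $\mu_r$ present, estimating $\sum_r\mu_r^2=n^2H^2$ direction by direction; if you build the unjustified normalisation into the optimisation you risk proving the bound only against the single component $h^{\mu+1}$ of $H$, which is a weaker statement than \eqref{L2}. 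So the plan identifies the correct mechanism (total symmetry, blockwise optimisation, the $6/(2+n_i)$ corrections), but it is a proof outline rather than a proof: the decisive algebraic lemma and the two-sided equality analysis remain to be supplied.
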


\begin{rem} For $\delta(2)$, inequality \eqref{L2} is due to T. Oprea [\,Chen's inequality in the Lagrangian case, Colloq. Math. {\bf 108} (2007),  163--169\,].\end{rem}

In \cite{cdvv} we also proved the following.
 
 \begin{thm} \label{T:6.2} For every $k$-tuple $(n_1,\ldots,n_k)\in \mathcal S(n)$, there exists a non-minimal Lagrangian submanifold which satisfies the equality case of \eqref{L2}.\end{thm}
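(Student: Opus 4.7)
The plan is to construct the required examples as warped product Lagrangian immersions whose local structure is read directly off the equality conditions \eqref{15.10}. Those conditions split $T_pM$ into $k+2$ blocks of dimensions $n_1,\ldots,n_k,1,n-\mu-1$ with $\mu=n_1+\cdots+n_k$, and they say that (i) on each of the first $k$ blocks, the second fundamental form is the sum of a trace-free symmetric tensor (a minimal Lagrangian contribution) plus a multiple of $Je_{\mu+1}$; (ii) the $(n-\mu)$-dimensional block spanned by $e_{\mu+1},\ldots,e_n$ carries exactly the second fundamental form of a Whitney-sphere-type Lagrangian immersion; and (iii) the different blocks are $h$-orthogonal. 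This block-orthogonality together with the presence of the distinguished direction $e_{\mu+1}$ strongly suggests a multiply-warped product whose factors correspond to the blocks, with the parameter $t$ dual to $e_{\mu+1}$ playing the role of the warping variable.

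Concretely, I would first pick $k$ minimal Lagrangian immersions $\psi_i:N_i^{n_i}\to M^{n_i}(4c)$ (e.g.\ horizontal lifts of minimal Lagrangians in $\mathbb{CP}^{n_i-1}$, or flat Clifford-type pieces when $c=0$) together with an $(n-\mu)$-dimensional Whitney-type Lagrangian immersion $W$ realising the equality in the corresponding lower-dimensional version of \eqref{L1}. For the flat case $c=0$, I would then assemble them into
\[
\Phi(x_1,\ldots,x_k,t,y)=\bigl(f_1(t)\psi_1(x_1),\ldots,f_k(t)\psi_k(x_k),F(t)\,W(y)\bigr),
\]
and for $c\ne 0$ I would use the analogous construction in $S^{2n+1}(1/\sqrt{c})$ and push down by the Hopf fibration to $\mathbb{CP}^n(4c)$ (respectively the anti-de Sitter/horosphere picture for $\mathbb{CH}^n(4c)$). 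Imposing the Lagrangian condition pins down the phases of $f_i$ and $F$, and matching the coefficients $\frac{3\lambda}{2+n_i}$, $\lambda$ and $3\lambda$ appearing in \eqref{15.10} against the second fundamental form of $\Phi$ produces a decoupled system of ODEs for the moduli of the $f_i$ and $F$, with a single free function $\lambda(t)$. Taking $\lambda\not\equiv 0$ automatically produces a non-minimal example, and the one-parameter freedom ensures such $\lambda$ exist.

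The step I expect to be the main obstacle is the direct second-fundamental-form computation identifying the geometry of $\Phi$ with the prescribed block form \eqref{15.10}; in particular, verifying that the diagonal block coefficient is precisely $\frac{3}{2+n_i}\lambda$ rather than some other function of $n_i$ is where the specific numerology $6\sum(2+n_i)^{-1}$ appearing in \eqref{L2} is generated, and this must fall out of an exact trace and orthogonality calculation for the Levi-Civita connection of the warped metric. Once these block coefficients match, the off-diagonal vanishings $h(e_{\alpha_i},e_{\alpha_j})=0$ for $i\ne j$ and $h(e_{\alpha_i},e_u)=0$ are forced by the orthogonality of the factors, so the equality in \eqref{L2} follows from Theorem~\ref{T:6.1}. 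Non-minimality is then immediate from \eqref{15.10}: the mean curvature vector of $\Phi$ is a non-zero multiple of $\lambda Je_{\mu+1}$, which does not vanish for our choice of $\lambda$. This simultaneously handles all three cases $c=0$, $c>0$, $c<0$ and every admissible $k$-tuple $(n_1,\ldots,n_k)\in\mathcal S(n)$.
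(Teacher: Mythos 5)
The survey itself contains no proof of Theorem \ref{T:6.2}: it is quoted from \cite{cdvv}, where sharpness is established by explicitly exhibiting non-minimal Lagrangian submanifolds whose second fundamental form has the form \eqref{15.10}. Your plan is in that same spirit (assemble minimal Lagrangian/Legendrian pieces and a Whitney-type profile into a warped-product or complex-extensor type immersion, then invoke the equality characterization of Theorem \ref{T:6.1}), but as written it is an announcement of a construction rather than a construction, and the part you defer is precisely where the theorem lives. Already at the level of the ansatz there are inconsistencies: if $W$ is an $(n-\mu)$-dimensional immersion and $t$ is an additional parameter, the source of $\Phi$ has dimension $n+1$; what you presumably want is a profile curve $F(t)$ applied to the unit sphere $\iota:S^{\,n-\mu-1}\to\mathbb{E}^{n-\mu}$, so that the pair $(t,y)$ sweeps out the $(n-\mu)$-dimensional Whitney-type block, not ``$F(t)$ times a Whitney immersion''. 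Relatedly, a Whitney-type (hence non-minimal) factor cannot ``realise equality in the lower-dimensional version of \eqref{L1}'': by Theorem \ref{T:5.17}, equality in \eqref{L1} forces minimality, so that description of $W$ is not what \eqref{15.10} asks for.

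The substantive gap is the coefficient matching. Equality requires a single function $\lambda$ for which block $i$ is umbilical toward $Je_{\mu+1}$ with factor exactly $\tfrac{3\lambda}{2+n_i}$, the same factor appears in $h(e_{\alpha_i},e_{\mu+1})$, the last block carries the ratio-$3$ H-umbilical structure $3\lambda,\lambda$, and in addition the Lagrangian condition constrains the phases of $f_1,\dots,f_k,F$; this is an a priori overdetermined system of ODEs, and ``the one-parameter freedom ensures such $\lambda$ exist'' is an assertion, not an argument. Likewise the vanishings $h(e_{\alpha_i},e_{\alpha_j})=0$ and $h(e_{\alpha_i},e_u)=0$ are not ``forced by orthogonality of the factors'': mixed derivatives involving $t$ do produce nonzero components (indeed $h(e_{\alpha_i},e_{\mu+1})\neq 0$ is required), so they must be computed from the Levi-Civita connection of the warped metric, and the $c\neq 0$ cases via Hopf lifts are only gestured at. Until you either write down and solve this system explicitly for every admissible $(n_1,\dots,n_k)$, or alternatively prescribe a metric and a cubic form of type \eqref{15.10}, verify the Gauss and Codazzi equations, and invoke the existence theorem for Lagrangian immersions into $M^n(4c)$, you have a plausible strategy consistent with \cite{cdvv} but not a proof of the theorem.
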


Theorem \ref{T:6.2} shows that inequality \eqref{L2}  cannot be improved further.

\subsection{Case II: $\sum_{i=1}^{k}n_{i}=n$}

For this case we prove the following inequality with different coefficient in \cite{cdvv}. 
 
  \begin{thm} \label{T:6.3} Let $M$ be a Lagrangian submanifold of a complex space form $M^n(4c)$. Then for each
 $(n_1,\ldots,n_k)\in {\mathcal S}(n)$ with $\sum_{i=1}^{k}n_{i}=n$ we have
\begin{equation}\begin{aligned} \label{L3} & 
\delta(n_1,\ldots,n_k) \leq  \frac{n^2 \big\{k-1-2\, {\sum_{i=2}^k} (2+ n_{i})^{-1}\big\}}{2 \big\{k-2\,{\sum_{i=2}^k} (2+ n_{i})^{-1} \big\}} H^2
\\&\hskip1.3in  +\frac{1}{2}\text{\small$ \Bigg\{$}n(n-1)-\sum_{i=1}^k n_i(n_i-1)\text{\small$\Bigg\}$} c\, ,\end{aligned}\end{equation}
where we assume that $n_1=\min_{i=1}^{n}\{ n_i\}$.

If the equality sign of \eqref{L3} holds at a point $p\in M$, the components of the second fundamental form with respect to some suitable orthonormal basis $\{e_{1},\ldots,e_{n}\}$ for $T_{p}M$ satisfy the following conditions:
\begin{itemize}
\item[{\rm (a)}] $h^{A}_{\alpha_{i}\alpha_{j}}=0$ for $i\ne j$ and $A\ne \alpha_{i},\alpha_{j}$;

\item[{\rm (b)}]  if $n_j \neq \min\{n_1,\ldots,n_k\}$: 
$$h_{\alpha_i \alpha_i}^{\beta_j}=0 \mbox{ if } i \neq j \mbox{ and } \sum_{\alpha_j \in \Delta_j} h_{\alpha_j \alpha_j}^{\beta_j} = 0,$$
\item[{\rm (c)}] if $n_j = \min\{n_1,\ldots,n_k\}$: 
$$\sum_{\alpha_j \in \Delta_j} h_{\alpha_j \alpha_j}^{\beta_j} = (n_i+2) h_{\alpha_i \alpha_i}^{\beta_j} \mbox{ for any } i \neq j \mbox{ and any } \alpha_i \in \Delta_i.$$

\end{itemize}
\end{thm}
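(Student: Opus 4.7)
At a point $p \in M$, choose an orthonormal basis $\{e_1,\ldots,e_n\}$ of $T_pM$ such that the subspaces $L_j = \text{span}\{e_\alpha : \alpha \in \Delta_j\}$, with $\Delta_j = \{n_1+\cdots+n_{j-1}+1,\ldots,n_1+\cdots+n_j\}$, realize the infimum in the definition of $\delta(n_1,\ldots,n_k)$ at $p$. Because $\sum n_i = n$, the $\Delta_j$'s partition $\{1,\ldots,n\}$. Write $h^A_{BC} := \langle h(e_B,e_C), Je_A\rangle$; by the Lagrangian hypothesis these components are totally symmetric in $A,B,C$. The Gauss equation in $M^n(4c)$ yields
\[ 2\tau = n^2 H^2 - \|h\|^2 + c\,n(n-1), \qquad 2\tau(L_j) = \textstyle\sum_A (S^A_j)^2 - \sum_{A,\alpha_j,\beta_j \in \Delta_j}(h^A_{\alpha_j \beta_j})^2 + c\,n_j(n_j-1), \]
where $S^A_j := \sum_{\alpha_j \in \Delta_j} h^A_{\alpha_j \alpha_j}$. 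Subtracting isolates the curvature term of \eqref{L3} and reduces the theorem to the algebraic inequality
\[ n^2 H^2 - \textstyle\sum_{A,j} (S^A_j)^2 - 2\sum_{A,\, i<j}\sum_{\alpha_i \in \Delta_i,\,\beta_j \in \Delta_j}(h^A_{\alpha_i \beta_j})^2 \leq \frac{n^2(k-1-2s)}{k-2s}\, H^2, \]
where $s := \sum_{i=2}^k (2+n_i)^{-1}$.

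A plain Cauchy--Schwarz applied to the trace identity $nH^A = \sum_j S^A_j$ gives $\sum_j (S^A_j)^2 \geq (nH^A)^2/k$, which only recovers the coarser coefficient $(k-1)n^2/k$ of Theorem \ref{T:5.16}. The sharper constant in \eqref{L3} comes from exploiting the Lagrangian symmetry $h^A_{BC} = h^B_{AC} = h^C_{AB}$: for $A = \gamma_i \in \Delta_i$ and $\alpha_j \in \Delta_j$ with $i \neq j$, the term $h^{\gamma_i}_{\alpha_j \alpha_j}$ equals $h^{\alpha_j}_{\gamma_i \alpha_j}$, which is a cross-block component, so parts of $\sum_{A,j}(S^A_j)^2$ already overlap with the cross-block sum. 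My plan is to recombine these using a weighted Cauchy--Schwarz of the form
\[ n^2 (H^A)^2 \leq \lambda_A \textstyle\sum_j (S^A_j)^2 + \mu_A \sum (\text{cross terms})^2, \]
with multipliers $\lambda_A, \mu_A$ chosen --- and depending on whether $A$ lies in the distinguished minimal block --- to make the estimate sharp. Running this constrained optimization is what produces the weights $(2+n_i)^{-1}$ appearing in $s$.

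The principal obstacle, absent from the Case I argument in \cite{cdvv} where the free direction $e_{\mu+1}$ outside all blocks absorbs trace imbalance, is that here every index is already booked into some $\Delta_i$, so no slack variable is available. To compensate, one block must play a distinguished role, and the optimization selects the one of smallest dimension; this is the reason for the hypothesis $n_1 = \min\{n_i\}$ and for the sum $s$ starting at $i = 2$. Finally, equality in each of the Cauchy--Schwarz steps reads off the stated conditions on $h$: the triply cross components vanish, giving (a); the partial traces $\sum_{\alpha_j} h^{\beta_j}_{\alpha_j \alpha_j}$ must vanish when $n_j \neq n_1$, giving (b); and the tight linear combinations in the minimal block yield the asymmetric identity $\sum_{\alpha_j \in \Delta_j} h^{\beta_j}_{\alpha_j \alpha_j} = (n_i + 2) h^{\beta_j}_{\alpha_i \alpha_i}$ of (c). Sufficiency of (a)--(c) for equality is then a direct check using the same decomposition.
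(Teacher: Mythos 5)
Your reduction is sound: adapting an orthonormal basis to subspaces realizing $\delta(n_1,\ldots,n_k)$, writing the Gauss equations for $\tau$ and $\tau(L_j)$, and cancelling the in-block terms does reduce \eqref{L3} to the algebraic statement
\begin{equation*}
\sum_A\frac{\bigl(\sum_j S^A_j\bigr)^2}{k-2s}\;\leq\;\sum_{A,j}(S^A_j)^2+2\sum_A\sum_{i<j}\sum_{\alpha_i\in\Delta_i,\,\beta_j\in\Delta_j}(h^A_{\alpha_i\beta_j})^2,
\end{equation*}
and you correctly identify both why the naive Cauchy--Schwarz only gives the coefficient of Theorem \ref{T:5.16} and why Case II cannot borrow the Case I argument (no index outside the blocks). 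But the proof stops exactly where the theorem begins: the weighted estimate that is supposed to produce the constant $(k-2s)^{-1}$ with $s=\sum_{i\geq 2}(2+n_i)^{-1}$, the reason the minimal block is the distinguished one, and the equality conditions (a)--(c) are all announced (``my plan is\ldots'', ``running this constrained optimization is what produces the weights'') rather than carried out. Since the entire content of Theorem \ref{T:6.3} beyond Theorem \ref{T:5.16} is precisely this sharp coefficient and its equality analysis, this is a genuine gap, not a routine verification. (Note also that the survey itself does not prove this theorem; it defers to \cite{cdvv}, where the corresponding algebraic lemma is the heart of the matter.)

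Moreover, the per-normal-direction inequality you propose, $n^2(H^A)^2\leq\lambda_A\sum_j(S^A_j)^2+\mu_A\sum(\text{cross terms})^2$, is problematic as written. The overlap you want to exploit does not stay within a fixed $A$: for $A=\gamma_i\in\Delta_i$ and $\alpha_j\in\Delta_j$ with $j\neq i$, the component $h^{\gamma_i}_{\alpha_j\alpha_j}$ entering $S^{\gamma_i}_j$ equals, by total symmetry, $h^{\alpha_j}_{\gamma_i\alpha_j}$, which is a cross-block term attached to the \emph{different} normal direction $Je_{\alpha_j}$. So any scheme that pays for $(\sum_j S^A_j)^2$ using cross terms must couple the inequalities for different $A$'s and keep careful count of how many times each squared component is consumed; without that bookkeeping one cannot rule out double counting, and one cannot read off conditions (a)--(c), in particular the asymmetric relation $\sum_{\alpha_j\in\Delta_j}h^{\beta_j}_{\alpha_j\alpha_j}=(n_i+2)h^{\beta_j}_{\alpha_i\alpha_i}$ for the minimal block. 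To complete the argument you would need to state and prove the explicit algebraic lemma (with its equality case) that effects this coupling; as it stands, the proposal is a correct framework plus an unexecuted optimization.
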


\begin{rem}
In the case of equality, we don't have information about $h_{\alpha_i \beta_i}^{\gamma_i}$, where $\alpha_i$, $\beta_i$ and $\gamma_i$ are mutually different indices in the same block $\Delta_i$.
\end{rem}

 The following theorem from \cite{cdvv} implies that  inequality \eqref{L3} is sharp.  
\begin{thm} For each $k$-tuple $(n_{1}.\ldots,n_{k})\in {\mathcal S}(n)$ with $\sum_{i=1}^{k}n_{i}=n$, there
exists a Lagrangian submanifold in $M^{n}(4c)$ satisfying the equality of the improved inequality \eqref{L3} identically. \end{thm}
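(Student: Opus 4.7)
The plan is to construct, for each partition $(n_{1},\ldots,n_{k})\in\mathcal{S}(n)$ with $\sum_{i}n_{i}=n$ and $n_{1}=\min_{i}\{n_{i}\}$, an explicit Lagrangian immersion whose second fundamental form satisfies conditions (a), (b), (c) of Theorem~\ref{T:6.3}. Since the inequality \eqref{L3} was derived by bounding the norm of $h$ subject to those algebraic constraints, producing any example realizing them suffices. My construction will be modeled on the building blocks used in Theorem~\ref{T:6.2} and on the Calabi-type compositions familiar from the affine setting described in Theorem~\ref{T:5.10}.

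First I would treat the flat case $c=0$. For each $j\neq 1$ with $n_{j}>n_{1}$, pick a minimal Lagrangian immersion $\phi_{j}:N_{j}^{n_{j}}\to\mathbb{C}^{n_{j}}$ whose second fundamental form is of the "doubled-Veronese" type arising in the known $\delta(n_{j})$-ideal examples (so that condition (b) is automatic within its own block). For the distinguished block of dimension $n_{1}$, take an analogous minimal Lagrangian piece $\phi_{1}:N_{1}^{n_{1}}\to\mathbb{C}^{n_{1}}$, but couple it to the other blocks by a one-parameter family of ambient complex rotations. Concretely, seek a Lagrangian immersion of the form
\[
\Phi(x_{1},\ldots,x_{k};t)=\bigl(a_{1}(t)\,\phi_{1}(x_{1}),\,a_{2}(t)\,\phi_{2}(x_{2}),\,\ldots,\,a_{k}(t)\,\phi_{k}(x_{k})\bigr),
\]
where $a_{i}:I\to\mathbb{C}$ are complex-valued functions chosen so that $\Phi^{*}\omega_{0}=0$ and so that the product-of-cones structure matches the required block decomposition of $T_{p}M$.

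The heart of the argument is then a computation of the induced $h$ in the adapted frame $\{e_{\alpha_{1}},\ldots,e_{\alpha_{k}},e_{t}\}$ obtained by putting $e_{t}$ along the fiber direction. Conditions (a) and (b) reduce to the minimality and block-orthogonality already built into the $\phi_{j}$, while condition (c) translates into a single coupled ODE system relating the logarithmic derivatives $a_{i}'/a_{i}$ via the algebraic relation
\[
\sum_{\alpha_{1}\in\Delta_{1}}h^{\beta_{1}}_{\alpha_{1}\alpha_{1}}=(n_{i}+2)\,h^{\beta_{1}}_{\alpha_{i}\alpha_{i}},\qquad i\neq 1.
\]
The main obstacle will be showing that this ODE system, together with the Lagrangian constraint, admits local solutions, and in particular that the weights $(n_{i}+2)^{-1}$ that produce the coefficient $1-2\sum_{i\geq 2}(2+n_{i})^{-1}$ in the denominator of \eqref{L3} arise naturally from the compatibility of these equations. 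Once the existence of solutions is established, a direct check on a neighborhood of a generic point gives the equality in \eqref{L3} pointwise.

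For $c\neq 0$ I would lift the flat construction through the standard fibrations: the Hopf map $\pi:S^{2n+1}(c)\to\mathbb{C}P^{n}(4c)$ when $c>0$, and the analogous indefinite Hopf map (or horospherical projection from anti-de Sitter space) when $c<0$. Since horizontal Legendrian lifts and their projections preserve both the Lagrangian condition and the local algebraic structure of the second fundamental form, the equality in \eqref{L3} at each point of the flat model will descend to equality in the curved ambient, completing the proof for arbitrary $c$.
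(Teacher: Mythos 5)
The paper itself contains no proof of this statement: it quotes the result from the preprint \cite{cdvv}, so your argument has to stand on its own, and as written it has genuine gaps. The most concrete one is dimensional: since $\sum_{i=1}^{k}n_i=n$, the product of your blocks $N_1^{n_1}\times\cdots\times N_k^{n_k}$ already has dimension $n$, so adjoining the extra parameter $t$ produces an $(n+1)$-dimensional object mapped into $\mathbb{C}^{n_1}\times\cdots\times\mathbb{C}^{n_k}=\mathbb{C}^n$, which cannot be Lagrangian; indeed your ``adapted frame'' $\{e_{\alpha_1},\ldots,e_{\alpha_k},e_t\}$ consists of $n+1$ vectors. Constructions of Calabi-composition or warped-product type only work here if the building blocks have strictly smaller dimension (e.g.\ Legendrian/horizontal pieces of dimension $n_j-1$, or one distinguished block absorbing the extra direction), chosen so that the dimensions add up to $n$; this is precisely the structural point your ansatz misses. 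Second, the analytic heart of the construction --- solvability of the coupled ODE system that is supposed to force the weights $(2+n_i)^{-1}$ and hence the coefficient in \eqref{L3} --- is explicitly deferred (``the main obstacle will be showing\dots''), so nothing is actually established beyond a plausible ansatz. Note also that Theorem \ref{T:6.3} states conditions (a)--(c) only as \emph{necessary} consequences of equality; your opening claim that exhibiting a second fundamental form satisfying them ``suffices'' for equality is asserted, not verified, and in any case for sharpness of the coefficient of $H^2$ you need the example to be non-minimal, which your construction never checks.

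The reduction of the curved cases to the flat one is also not valid as described. There is no fibration carrying Lagrangian submanifolds of $\mathbb{C}^n$ to Lagrangian submanifolds of $M^n(4c)$ with $c\neq 0$ while preserving the induced metric and the second fundamental form: the Hopf fibration relates \emph{Legendrian} submanifolds of $S^{2n+1}$ (resp.\ of the anti-de Sitter space) to Lagrangian submanifolds of the curved complex space forms, and an equality example sitting in $\mathbb{C}^n$ is not such a Legendrian datum. Since both $\delta(n_1,\ldots,n_k)$ and the curvature term $\frac12\{n(n-1)-\sum_i n_i(n_i-1)\}c$ in \eqref{L3} change with $c$, equality in the flat model does not ``descend''; the cases $c>0$ and $c<0$ require their own constructions (e.g.\ horizontal lifts of Legendrian data built separately for each ambient curvature), which is how the cited work \cite{cdvv} proceeds.
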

 
 This result implies that inequality \eqref{L3} cannot be improved further as well.

 \begin{rem} Lagrangian submanifolds of complex space forms satisfying some special cases of the improved inequalities  \eqref{L2} and \eqref{L3} have been investigated and classified recently in \cite{bo,BV,book,CD11,cdvv,cdvv2,cdv,cp,cpw}.
 \end{rem}

\bibliographystyle{amsplain}

\end{document}